\documentclass[a4paper]{article}
\usepackage{fullpage}
\usepackage{authblk}
\usepackage{graphicx}
\usepackage{color}
\usepackage{amsmath,amsthm,amssymb}
\usepackage{mathtools}
\usepackage[hidelinks]{hyperref}
\usepackage{xspace}
\usepackage{epsfig}
\usepackage[dvipsnames]{xcolor}

\newcommand {\mm}[1]   {\ifmmode{#1}\else{\mbox{\(#1\)}}\fi}

\newcommand {\ceiling}[1] {{\left\lceil #1 \right\rceil}}
\newcommand {\floor}[1] {{\left\lfloor #1 \right\rfloor}}

\newcommand{\ignore}[1]{}

\newcommand{\Rspace}        {\mm{{\mathbb R}}}

\newcommand{\domain}[2]     {\mm{{\rm dom}{({#1},{#2})}}}
\newcommand{\Voronoi}[2]    {\mm{{\rm Vor}_{#1}{({#2})}}}
\newcommand{\Overlay}[3]    {\mm{{\rm Vor}_{#1}{({#2}\mid{#3})}}}
\newcommand{\Delaunay}[2]   {\mm{{\rm Del}_{#1}{({#2})}}}
\newcommand{\Membrane}[1]   {\mm{M{({#1})}}}

\newcommand{\Crossings}     {\mm{\varrho_{\rm cross}}}

\newcommand{\Exp}[1]        {\mm{{\mathbb E}{[{#1}]}}}
\newcommand{\intensity}     {\mm{\varrho}}
\newcommand{\VV}[2]         {\mm{V_{#1,#2}}}
\newcommand{\DD}[2]         {\mm{D_{#1,#2}}}
\newcommand{\XX}[1]         {\mm{X_{#1}}}

\newcommand{\length}[1]     {\mm{\rm length}{[{#1}]}}
\newcommand{\Union}[2]      {\mm{\rm Union}{({#1},{#2})}}

\newcommand{\card}[1]       {\mm{{\#}{#1}}}
\newcommand{\dime}[1]       {\mm{\rm dim\,}{#1}}

\newcommand{\affine}[1]     {\mm{\rm aff\,}{#1}}
\newcommand{\Edist}[2]      {\mm{\|{#1}-{#2}\|}}

\newcommand{\Skip}[1]       {}
\newcommand{\HE}[1]         {{\textcolor{blue}{{{\rm {#1}}}}}}

\definecolor{blue-green}{rgb}{0.0, 0.87, 0.87}


\theoremstyle{definition}

\newtheorem{theorem}{Theorem}
\numberwithin{theorem}{section}

\newtheorem{lemma}[theorem]{Lemma}

\numberwithin{equation}{section}


\bibliographystyle{plainurl}

\title{On the Size of Chromatic Delaunay Mosaics}

\author{Ranita Biswas}
\author{Sebastiano Cultrera di Montesano}
\author{Ond\v{r}ej Draganov}
\author{Herbert~Edelsbrunner}
\author{Morteza Saghafian}
\affil{ISTA (Institute of Science and Technology Austria), Kloster\-neu\-burg, Austria}

\date{}



\begin{document}
\maketitle

\begin{abstract}
Given a locally finite set $A \subseteq \Rspace^d$ and a coloring $\chi \colon A \to \{0,1,\ldots,s\}$, we introduce the \emph{chromatic Delaunay mosaic} of $\chi$, which is a Delaunay mosaic in $\Rspace^{s+d}$ that represents how points of different colors mingle.
  Our main results are bounds on the size of the chromatic Delaunay mosaic, in which we assume that $d$ and $s$ are constants.
  For example, if $A$ is finite with $n = \card{A}$, and the coloring is random, then the chromatic Delaunay mosaic has $O(n^{\ceiling{d/2}})$ cells in expectation.
  In contrast, for Delone sets and Poisson point processes in $\Rspace^d$, the expected number of cells within a closed ball is only a constant times the number of points in this ball.
  Furthermore, in $\Rspace^2$ all colorings of a dense set of $n$ points have chromatic Delaunay mosaics of size $O(n)$.
  This encourages the use of chromatic Delaunay mosaics in applications.
\end{abstract}

\medskip
{\footnotesize
    \noindent {\bf Funding.}
    This project has received funding from the European Research Council (ERC) under the European Union's Horizon 2020 research and innovation programme, grant no.\ 788183, from the Wittgenstein Prize, Austrian Science Fund (FWF), grant no.\ Z 342-N31, and from the DFG Collaborative Research Center TRR 109, `Discretization in Geometry and Dynamics', Austrian Science Fund (FWF), grant no.\ I 02979-N35.}


\section{Introduction}
\label{sec:1}

The work described in this paper is motivated by applications in biology and medicine, and by structural obstacles encountered in related topological constructions.
The motivating applications have to do with the interaction of the members of a small number of different populations, such as cell types that segregate during early development \cite{Hei12}, or the tumor immune cell microenvironment in cancer \cite{Bin18}.
The challenge the interaction poses to topological data analysis has to do with the maps between the various sets, which either do not exist or require high-dimensional complexes whose sheer size is prohibitive for applications.

\medskip
A solution for two colors was proposed by Reani and Bobrowski \cite{ReBo21}, which we generalize to arbitrarily many colors and whose structural and combinatorial properties we study.
Given a locally finite set in $\Rspace^d$ and a coloring with $s+1$ colors, this generalization places the points of different colors on $s+1$ parallel copies of $\Rspace^d$, which intersect an orthogonal copy of $\Rspace^s$ at the vertices of the standard $s$-simplex.
This is a locally finite set in $\Rspace^{s+d}$, and the \emph{chromatic Delaunay mosaic} of the colored set in $\Rspace^d$ is, by definition, the Delaunay mosaic of the set in $\Rspace^{s+d}$.
A similar set-up was used in \cite{CEF01} for the purpose of geometric morphing between $s+1$ shapes, so our work also sheds light on that proposal to construct a shape space.
The structural results we wish to highlight are as follows:
\medskip \begin{itemize}
  \item the chromatic Delaunay mosaic contains the chromatic Delaunay mosaic as well as the Delaunay mosaic of any subset of the $s+1$ colors as a subcomplex; in particular, it contains the Delaunay~mosaic of each color individually and of all colors as subcomplexes;
  \item the $d$-dimensional section of the colorful cells in the chromatic Delaunay mosaic (the cells that have at least one vertex of each color) is dual to the overlay of the $s+1$ mono-chromatic Voronoi tessellations.
\end{itemize} \medskip
Our combinatorial results help gauge the extent to which chromatic Delaunay mosaics can be used in applications.
By the \emph{size} of a mosaic we mean the number or density of cells, which we relate to the number or density of the points.
Further important parameters are $d$ and $s$, which we assume are contants:
\medskip \begin{itemize}
  \item if the coloring of $n$ points in $\Rspace^d$ is random, then the chromatic Delaunay mosaic has expected size $O(n^{\ceiling{d/2}})$;
  \item for a Delone set in $\Rspace^d$, the expected number of cells of the chromatic Delaunay mosaic whose circumcenters project into a ball containing $n$ points of the Delone set is $O(n)$;
  \item for a dense set of $n$ points in $\Rspace^2$, the latter result can be strengthened to the worst-case size of the chromatic Delaunay mosaic being $O(n)$;
  \item a stationary Poisson point process in $\Rspace^d$ with bounded intensity has a chromatic Delaunay mosaic whose cells have bounded density, and we give explicit expressions for any number of colors in two dimensions and for two colors in any dimension.
\end{itemize} \medskip
To illustrate the results on Poisson point processes, we present computational experiments with bi- and tri-colored Poisson point processes in $\Rspace^2$ and $\Rspace^3$.
Note the conspicuous absence of the number of colors in all bounds given above, and this despite the fact that the chromatic Delaunay mosaic is an $(s+d)$-dimensional complex.

\medskip \noindent \textbf{Outline.}
Section~\ref{sec:2} presents general background on Delaunay mosaics and Voronoi tessellations.
Section~\ref{sec:3} introduces the chromatic Delaunay mosaic and proves some of its structural properties.
Section~\ref{sec:4} proves combinatorial bounds for the size of chromatic Delaunay mosaics.
Section~\ref{sec:5} studies the size of chromatic Delaunay mosaics for Poisson point processes and presents related computational experiments.
Section~\ref{sec:6} concludes the paper.

\section{Background}
\label{sec:2}

We need basic facts about Voronoi tessellations and their dual Delaunay mosaics in Euclidean space, and refer to \cite{AKL13} for further reading on the subject.

\subsection{Voronoi Tessellations}
\label{sec:2.1}

Letting $A \subseteq \Rspace^d$ be a finite set and $b \in A$ a point, the \emph{Voronoi domain} of $b$, denoted $\domain{b}{A}$, is the~set of points, $x \in \Rspace^d$, for which $\Edist{x}{b} \leq \Edist{x}{a}$ for all $a \in A$.
Since $A$ is finite, $\domain{b}{A}$ is the intersection of finitely many closed half-spaces and thus a convex polyhedron.
This polyhedron contains a neighborhood of $b$, which implies that it is $d$-dimensional.
A \emph{supporting hyperplane} of $\domain{b}{A}$ is a $(d-1)$-plane
whose intersection with the polyhedron is non-empty but with its interior is empty.
A \emph{face} of $\domain{b}{A}$ is the intersection with a supporting hyperplane, which is a convex polyhedron of dimension $p < d$.

The \emph{Voronoi tessellation} of $A$, denoted $\Voronoi{}{A}$, is the collection of Voronoi domains, $\domain{b}{A}$ with $b \in A$.
We refer to the domains as \emph{$d$-cells} and to their $p$-dimensional faces as \emph{$p$-cells} of $\Voronoi{}{A}$.
The $0$-cells are also called \emph{vertices} and the $1$-cells are also called \emph{edges}.
While any two $d$-cells of $\Voronoi{}{A}$ have disjoint interiors, they may intersect in shared faces.
More generally, the common intersection of one or more $d$-cells is either empty or a shared face.
For every $x \in \Rspace^d$, there is a unique cell of smallest dimension that contains $x$, and this cell contains $x$ in its interior.
It follows that the interiors of the cells of $\Voronoi{}{A}$ partition $\Rspace^d$.

\medskip
Writing $n = \card{A}$, it is clear that $\Voronoi{}{A}$ has precisely $n$ $d$-cells.
For $d = 2$, this implies that there are at most $3n$ edges and at most $2n$ vertices.
More generally for $n$ points in $\Rspace^d$, the Voronoi tessellation has $O (n^{\ceiling{d/2}})$ cells.
While this bound is tight, the number of cells depends on the relative position of the points and is much smaller for many sets, including some considered in this paper.
For example, the Voronoi tessellation of $n$ points chosen uniformly at random inside the unit cube in a constant-dimensional Euclidean space has only $O (n)$ cells in expectation; see e.g.\ \cite{Dwy91}.

\subsection{Delaunay Mosaics}
\label{sec:2.2}

The \emph{Delaunay mosaic} of $A \subseteq \Rspace^d$, denoted $\Delaunay{}{A}$, is the dual of the Voronoi tessellation of $A$.
To be specific, consider a $p$-cell of $\Voronoi{}{A}$, and observe that it is the common intersection of $m \geq d-p+1$ Voronoi domains.
Assuming this collection of domains is maximal, and writing $a_1, a_2, \ldots, a_m$ for the points in $A$ that generate them, we call the convex hull of the $a_i$ the \emph{dual Delaunay cell} of the Voronoi $p$-cell.
Its dimension is $q = d-p$.
The Delaunay mosaic of $A$ is the collection of Delaunay cells dual to cells of $\Voronoi{}{A}$.

We note that $\Delaunay{}{A}$ is a polyhedral complex; that is: it consists of closed polyhedral cells such that the boundary of each cell is the union of lower-dimensional cells in the complex.
Similarly, the collection of cells of $\Voronoi{}{A}$ is a polyhedral complex, but note that $\Voronoi{}{A}$ is, by definition, only the collection of Voronoi domains, which is not a complex.

\medskip
Call a $(d-1)$-dimensional sphere \emph{empty} of points in $A$ if no point in $A$ is enclosed by the sphere.
The points may lie on the sphere or outside the sphere, but they are not allowed to lie inside the sphere.
It is not difficult to see that the convex hull of $m$ points in $A$ is a cell in $\Delaunay{}{A}$ iff these $m$ points lie on an empty $(d-1)$-sphere, while all other points in $A$ lie strictly outside this sphere.
Indeed, the center of such an empty sphere is a point in the interior of the dual Voronoi cell, and the Voronoi domains generated by the $m$ points all share the cell.

\medskip
We say $A \subseteq \Rspace^d$ is \emph{generic}, or in \emph{general position}, if no $p+2$ points of $A$ lie on a common $(p-1)$-sphere, for $1 \leq p \leq d$.
In this case, all cells in $\Delaunay{}{A}$ are simplices, so $\Delaunay{}{A}$ is a simplicial complex in $\Rspace^d$.
Correspondingly, every $p$-cell of $\Voronoi{}{A}$ is the common intersection of exactly $d-p+1$ Voronoi domains, so the common intersection of any $d+2$ Voronoi domains is necessarily empty.
This is what we call a \emph{simple} decomposition of $\Rspace^d$.
In this case, the Delaunay mosiac is isomorphic to the \emph{nerve} of the Voronoi tessellation, which consists of all collections of domains in $\Voronoi{}{A}$ that have a non-empty common intersection.
The assumption that $A$ be generic often simplifies matters, and it can be simulated computationally \cite{EdMu90} to avoid cumbersome special cases.

\section{Chromatic Complexes}
\label{sec:3}

The main concepts in this section are the
chromatic Delaunay mosaics and Voronoi tessellations, which generalize the bi-chromatic construction in \cite{ReBo21} to more than two colors.

\subsection{Chromatic Delaunay Mosaics}
\label{sec:3.1}

Throughout this section, we let $A$ be $n$ points in $\Rspace^d$, $\sigma = \{0, 1, \ldots, s\}$ a collection of colors, $\chi \colon A \to \sigma$ a coloring, and $A_j = \chi^{-1} (j)$ the subset of points with color $j$, for $0 \leq j \leq s$.
We recall that the \emph{standard $s$-simplex} is the convex hull of the $s+1$ unit coordinate vectors in $\Rspace^{s+1}$.
To map this simplex to $s$ dimensions, we identify $\Rspace^s$ with the $s$-plane defined by $x_1 + x_2 + \ldots + x_{s+1} = 1$ in $\Rspace^{s+1}$ and parametrize it with the inherited $s+1$ \emph{barycentric coordinates}.
A subset of $t+1 \leq s+1$ unit coordinate vectors defines the standard $t$-simplex, which we map to $\Rspace^t$ by parametrizing it with the $t+1$ barycentric coordinates inherited from $\Rspace^{t+1}$.
We are ready to construct the \emph{chromatic Delaunay mosaic} of $\chi$, denoted $\Delaunay{}{\chi}$, which we do by writing $\Rspace^{s+d} = \Rspace^s \times \Rspace^d$ in three steps:
\medskip \begin{description}
  \item[{\sc Step~1:}] let $u_0, u_1, \ldots, u_s$ be the vertices of the standard $s$-simplex in $\Rspace^s$;
  \item[{\sc Step~2:}] set $A' = A_0' \sqcup A_1' \sqcup \ldots \sqcup A_s'$, in which $A_j' = u_j + A_j \subseteq u_j + \Rspace^d$, for each $0 \leq j \leq s$;
  \item[{\sc Step~3:}] construct $\Delaunay{}{\chi} = \Delaunay{}{A'}$;
\end{description} \medskip
\begin{figure}[htb]
  \centering \vspace{0.0in}
  \includegraphics[width=0.54\textwidth]{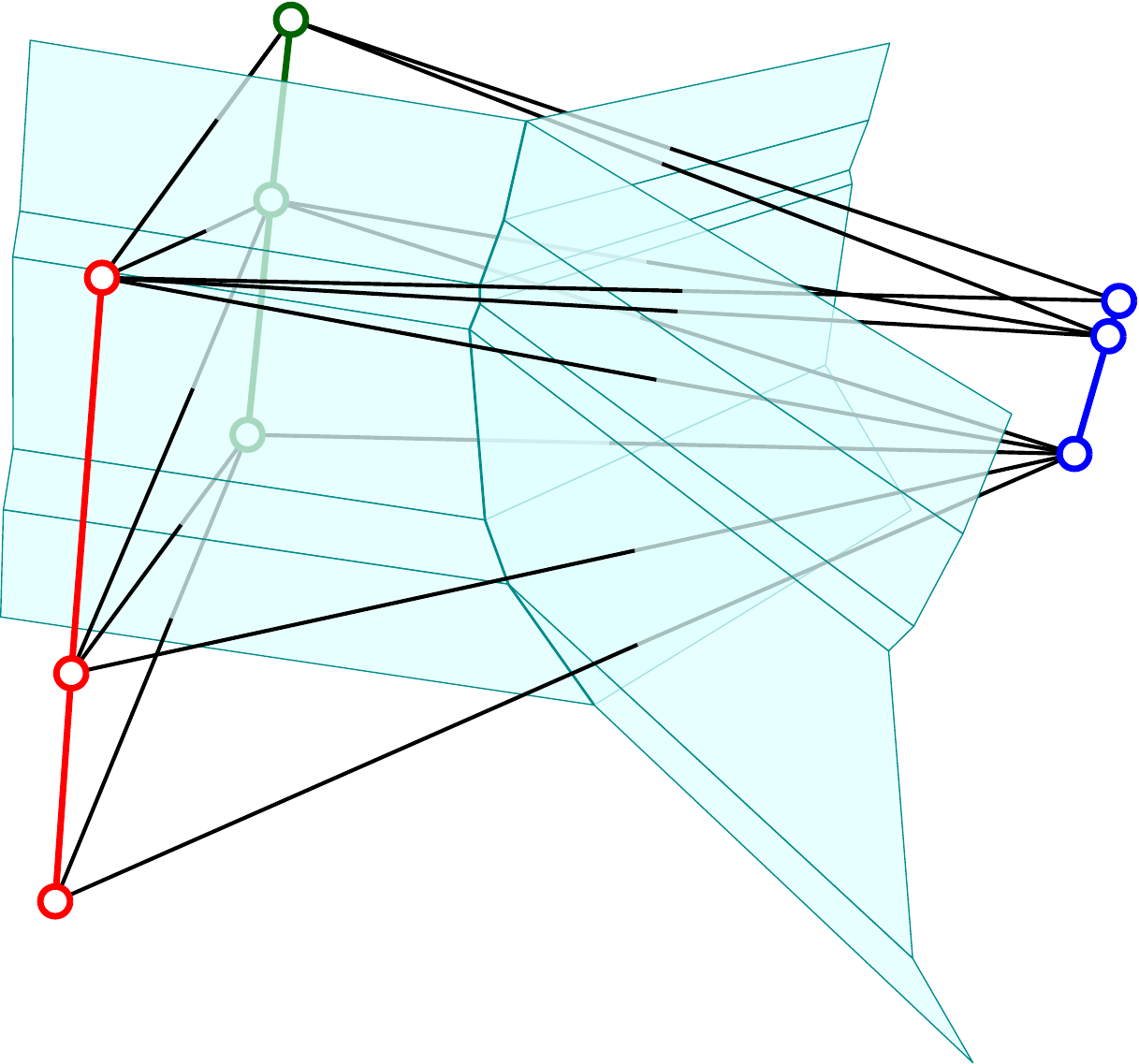}
  \caption{The chromatic Delaunay mosaic of three finite sets in $\Rspace^1$ together with the stratification of space into membranes.
  The points of each set are placed on a copy of $\Rspace^1$ orthogonal to the $2$-plane that carries the standard triangle constructed in Step~1.
  The stratification consists of a $1$-dimensional membrane geometrically located between the three lines, and three $2$-dimensional membranes, one between any two of the lines.}
  \label{fig:chromatic}
\end{figure}
see Figure~\ref{fig:chromatic}. 
Similarly, we apply the construction to a subset of the colors, $\tau \subseteq \sigma$, and write $\Delaunay{}{\chi|\tau}$, in which $\chi|\tau$ is our notation for the restriction of $\chi$ to $\chi^{-1} (\tau)$.
This mosaic lives in $\Rspace^{t+d}$, in which $t = 1+\card{\tau}$.
It is not difficult to see that $\Delaunay{}{\chi|\tau}$ is a subcomplex of $\Delaunay{}{\chi}$.
To state this property formally, we call a cell in $\Delaunay{}{\chi}$ \emph{$\tau$-colored} if the colors of its vertices belong to $\tau$, and \emph{$\tau$-colorful} if it is $\tau$-colored and has a vertex of every color in $\tau$.
Every cell is $\tau$-colorful for the smallest subset, $\tau \subseteq \sigma$, for which the cell is $\tau$-colored.
This implies that we get a partition of the cells into $2^{s+1}$ classes. 
Note that the $\tau$-colored cells form a subcomplex of $\Delaunay{}{A}$, while the $\tau$-colorful cells generally do not. 
\begin{lemma}[Sub-chromatic Delaunay Subcomplexes]
  \label{lem:sub-chromatic_Delaunay_subcomplexes}
  Let $A \subseteq \Rspace^d$ be finite, $\chi \colon A \to \sigma$ a coloring, and $\tau \subseteq \sigma$.
  Then the subcomplex of $\tau$-colored cells in $\Delaunay{}{\chi}$ is $\Delaunay{}{\chi|\tau}$.
\end{lemma}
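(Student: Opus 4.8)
The plan is to argue entirely through the empty-sphere characterization of Delaunay cells from Section~\ref{sec:2.2}: $\conv{P}$ is a cell of $\Delaunay{}{A'}$ exactly when some $(s+d-1)$-sphere passes through all points of $P\subseteq A'$ and has every remaining point of $A'$ strictly outside. Write $B = A_0'\sqcup\cdots$ restricted to the colors in $\tau$, i.e.\ the points of $A'$ whose color lies in $\tau$, and set $H = \affine{\{u_j : j\in\tau\}}\times\Rspace^d$, so that $B\subseteq H$. Two reductions will organize the proof. First, the vertices $u_j$ of the standard $s$-simplex are pairwise at distance $\sqrt2$, so $\{u_j : j\in\tau\}$ is isometric to the vertex set of the standard $(\card{\tau}-1)$-simplex; extending this isometry by the identity on the $\Rspace^d$ factor carries $B$ onto the point set defining $\Delaunay{}{\chi|\tau}$, color by color. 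Since Delaunay mosaics are isometry invariant, it suffices to show that the $\tau$-colored cells of $\Delaunay{}{A'}$ are precisely the cells of the Delaunay mosaic of $B$ computed intrinsically inside $H$. Second, I will repeatedly use that for $p\in H$ and a sphere in $\Rspace^{s+d}$ of center $c$ and radius $r$, the identity $\Edist{p}{c}^2 = \Edist{p}{c'}^2 + \Edist{c}{c'}^2$, with $c'$ the orthogonal projection of $c$ onto $H$, shows that $p$ lies on/inside/outside this sphere iff it lies on/inside/outside the sphere in $H$ of center $c'$ and radius $\sqrt{r^2-\Edist{c}{c'}^2}$.

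The inclusion stating that every $\tau$-colored cell of $\Delaunay{}{A'}$ is a cell of $\Delaunay{}{B}$ is the routine direction. Given such a cell with vertex set $P\subseteq B$ and an empty circumsphere $S$ in $\Rspace^{s+d}$, I intersect $S$ with $H$. Because $B\subseteq A'$ and $B\subseteq H$, the second reduction shows that this intersection is a sphere in $H$ through exactly $P$ with all other points of $B$ strictly outside, so $\conv{P}$ is a cell of $\Delaunay{}{B}$.

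The reverse inclusion is the main obstacle and the heart of the argument: I must lift an empty circumsphere from $H$ to all of $\Rspace^{s+d}$ while keeping every out-of-$\tau$ point strictly outside. Starting from an empty circumsphere of $P$ in $H$, with center $c$ whose $\Rspace^s$-component $c_s$ lies in $\affine{\{u_j:j\in\tau\}}$ and radius $r'$, I translate the center along a direction $(w,0)$ orthogonal to $H$ and enlarge the radius to $\sqrt{r'^2+h^2\norm{w}^2}$. Every point of $P$ then stays on the moving sphere, and by the computation above every point of $B\setminus P$ stays strictly outside for all $h$. An out-of-$\tau$ point $(u_k,a)$ lies outside iff $\norm{u_k-c_s}^2 + \norm{a-c_d}^2 - r'^2 > 2h\scalprod{u_k-c_s}{w}$, whose right-hand side tends to $-\infty$ as $h\to\infty$ provided $\scalprod{u_k-c_s}{w}<0$. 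It therefore suffices to pick $w$ orthogonal to the direction space of $\affine{\{u_j:j\in\tau\}}$ with $\scalprod{u_k}{w}<\scalprod{u_j}{w}$ for all $k\notin\tau$ and $j\in\tau$, since $c_s$ lies in the face and hence $\scalprod{c_s}{w}=\scalprod{u_j}{w}$.

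Such a $w$ comes from the simplex geometry: taking $w=g-g^{\ast}$, where $g$ and $g^{\ast}$ are the centroids of $\{u_j:j\in\tau\}$ and of $\{u_k:k\notin\tau\}$, a direct computation with the $u_i$ as unit coordinate vectors gives $w$ orthogonal to the face and $\scalprod{u_j}{w}=1/\card{\tau}>0>-1/(s+1-\card{\tau})=\scalprod{u_k}{w}$. As $A$ is finite and each quantity $\scalprod{u_k-c_s}{w}$ is strictly negative, a single large $h$ pushes all out-of-$\tau$ points strictly outside at once; the resulting sphere is empty with exactly $P$ on it, so $\conv{P}$ is a $\tau$-colored cell of $\Delaunay{}{A'}$. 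Combining the two inclusions with the isometry of the first reduction identifies the subcomplex of $\tau$-colored cells of $\Delaunay{}{\chi}$ with $\Delaunay{}{\chi|\tau}$, proving the lemma.
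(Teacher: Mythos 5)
Your proof is correct and takes essentially the same route as the paper's: both rest on the empty-sphere characterization and on lifting an empty sphere from the affine span of the $\tau$-colored points to an empty full-dimensional sphere, exploiting that the vertices $u_k$, $k \notin \tau$, are strictly separated from the face spanned by $\{u_j : j \in \tau\}$. The paper compresses this into a supporting-hyperplane remark and asserts both the extension and the fact that $\Delaunay{}{\chi|\tau}$ exhausts the $\tau$-colored cells of $\Delaunay{}{\chi}$, whereas you make the same mechanism explicit (the drifting one-parameter family of spheres in direction $w = g - g^{\ast}$, and the converse inclusion via orthogonal projection onto the span), so your write-up is a more detailed rendering of the identical idea.
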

\begin{proof}
  Let $H$ be a hyperplane in $\Rspace^{s+d}$ that passes through all points with color $\tau$ such that all other points in $A'$ are contained in an open half-space bounded by $H$.
  The cells of $\Delaunay{}{\chi|\tau}$ are characterized by the existence of an empty $(t+d-1)$-sphere in $H$ that passes through the vertices of the cell and through no other points with color in $\tau$.
  Since all points with color in $\sigma \setminus \tau$ lie in an open half-space bounded by $H$, we can extend this $(t+d-1)$-sphere to an empty $(s+d-1)$-sphere that passes through the same points.
  Hence, $\Delaunay{}{\chi|\tau} \subseteq \Delaunay{}{\chi}$, which implies the claim because $\Delaunay{}{\chi|\tau}$ exhausts all $\tau$-colored cells in $\Delaunay{}{\chi}$.
\end{proof}

It is perhaps more difficult to see how $\Delaunay{}{\chi}$ relates to $\Delaunay{}{A}$.
In the relatively straightforward simplicial case, $\Delaunay{}{\chi}$ contains a subcomplex whose projection to $\Rspace^d$ is $\Delaunay{}{A}$; see Figure~\ref{fig:delA_in_delChi}.
In the general and therefore not necessarily simplicial case, we can for example have a convex quadrangle in $\Delaunay{}{A}$ that is the projection of a tetrahedron in $\Delaunay{}{\chi}$.
We formulate the relationship that allows for this and similar cases  in terms of the nerves of $\Voronoi{}{A}$ and $\Voronoi{}{\chi}$, which are possibly high-dimensional abstract simplicial complexes.

\begin{figure}[htb]
  \centering \vspace{0.05in}
  \includegraphics[width=.3\textwidth]{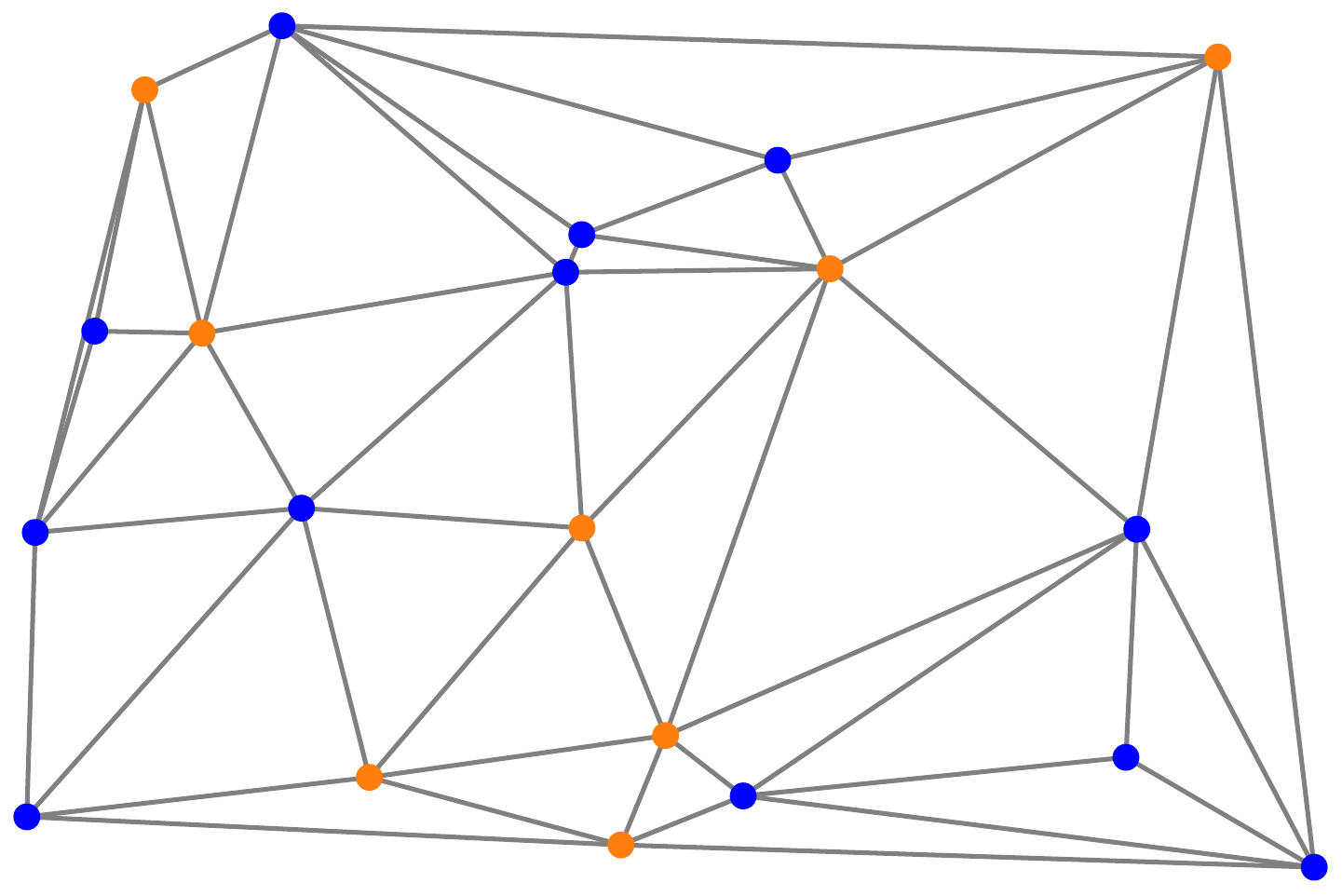}\hfill
  \includegraphics[width=.3\textwidth]{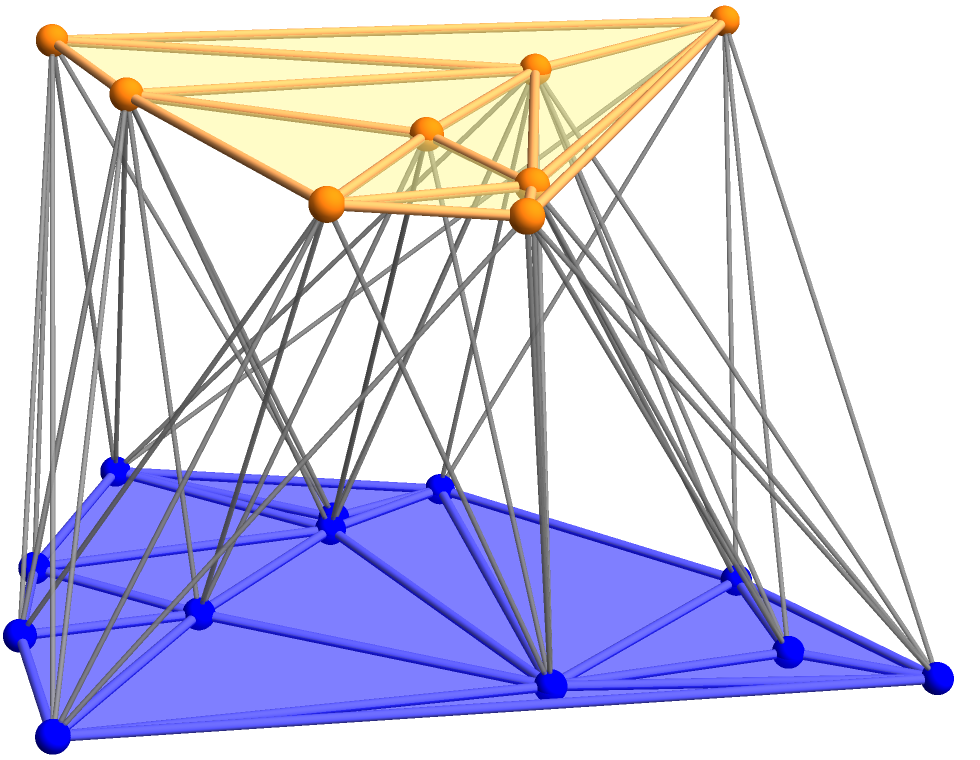}\hfill
  \includegraphics[width=.3\textwidth]{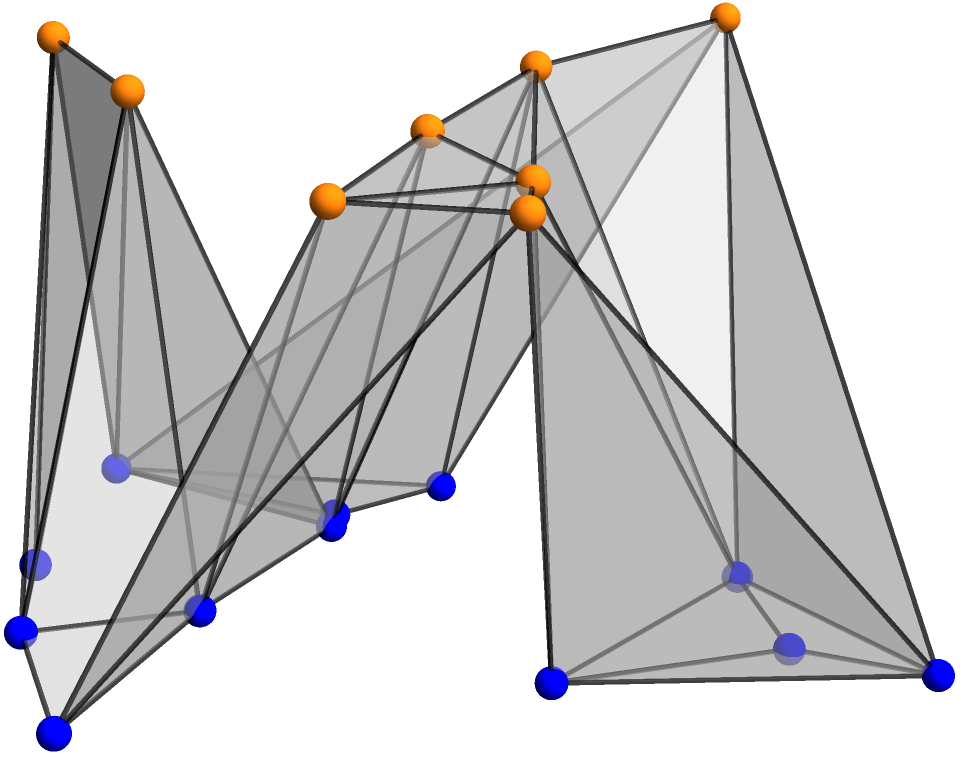}
  \vspace{-0.05in}
  \caption{\emph{Left:} the Delaunay mosaic of a bi-colored set in the plane, $\Delaunay{}{A}$.
  \emph{Middle:} the chromatic Delaunay mosaic, $\Delaunay{}{\chi}$, with colorful triangles left unfilled for clarity.
  \emph{Right:} the subcomplex of $\Delaunay{}{\chi}$ that is isomorphic to $\Delaunay{}{A}$.}
  \label{fig:delA_in_delChi}
\end{figure}

\begin{lemma}[Projection to Delaunay Mosaic]
  \label{lem:projection_to_Delaunay_mosaic}
  Let $A \subseteq \Rspace^d$ be finite, $\sigma = \{0, 1, \ldots, s\}$, and $\chi \colon A \to \sigma$ a coloring.
  Then the nerve of the $(s+d)$-cells of $\Voronoi{}{\chi}$ in $\Rspace^{s+d}$ has a subcomplex that projects to the nerve of the $d$-cells of $\Voronoi{}{A}$ in $\Rspace^d$.
\end{lemma}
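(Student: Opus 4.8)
The plan is to construct an explicit simplicial embedding of $\Nerve{\Voronoi{}{A}}$ into $\Nerve{\Voronoi{}{\chi}}$ whose inverse, on vertices, is the orthogonal projection $\pi \colon \Rspace^{s+d} = \Rspace^s \times \Rspace^d \to \Rspace^d$ onto the second factor. Writing $a' = u_{\chi(a)} + a \in A'$ for the lift of $a \in A$, one has $\pi(a') = a$, so $\pi$ already identifies the vertex sets of the two nerves; what remains is to match their simplices, and I would do this by lifting empty spheres from $\Rspace^d$ to $\Rspace^{s+d}$.

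The one idea the proof turns on is that the barycenter $y_0$ of the standard $s$-simplex is equidistant from all of $u_0, \ldots, u_s$: because these are the vertices of a regular simplex, $\Edist{y_0}{u_j}^2$ is independent of $j$ (equal to $s/(s+1)$ for the standard embedding), and I call this common value $r^2$. I would record the resulting distance identity: for $x = (y_0, c) \in \Rspace^s \times \Rspace^d$ and any $a \in A$,
\[
  \Edist{x}{a'}^2 = \Edist{y_0}{u_{\chi(a)}}^2 + \Edist{c}{a}^2 = r^2 + \Edist{c}{a}^2 ,
\]
which says that, once the $\Rspace^s$-coordinate is placed at $y_0$, the distances from $x$ to the lifted points agree with the distances in $\Rspace^d$ up to the single additive constant $r^2$, regardless of the colors.

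With this in hand I would lift simplices as follows. Given a simplex $\{a_1, \ldots, a_m\}$ of $\Nerve{\Voronoi{}{A}}$, pick a point $c \in \bigcap_i \domain{a_i}{A}$ and set $R = \Edist{c}{a_i}$, the common minimal distance from $c$ to $A$, so that $\Edist{c}{b} \geq R$ for all $b \in A$. With $x = (y_0, c)$, the identity then gives $\Edist{x}{a_i'}^2 = r^2 + R^2$ for each $i$ and $\Edist{x}{b'}^2 = r^2 + \Edist{c}{b}^2 \geq r^2 + R^2$ for every $b \in A$; hence $x \in \bigcap_i \domain{a_i'}{A'}$ and $\{a_1', \ldots, a_m'\}$ is a simplex of $\Nerve{\Voronoi{}{\chi}}$. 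Since this assignment is injective on vertices and respects passing to subsets, its image is a subcomplex of $\Nerve{\Voronoi{}{\chi}}$ onto which $\pi$ restricts to a simplicial isomorphism from $\Nerve{\Voronoi{}{A}}$, which is the claim.

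The step I expect to require the most care, though it is a subtlety rather than a real obstacle, is the non-generic case. The argument never invokes general position, so for points $b$ lying on the empty sphere in $\Rspace^d$ the inequality $\Edist{c}{b} \geq R$ is an equality, and such $b'$ also meet $x$; I would check that these extra coincidences enlarge the common intersections in $\Rspace^{s+d}$ in exact correspondence with those in $\Rspace^d$, so that the image is genuinely closed under faces and $\pi$ restricts to an isomorphism rather than merely a surjection.
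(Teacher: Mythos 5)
Your proof is correct and takes essentially the same approach as the paper: the paper lifts each empty $(d-1)$-sphere $S$ of $\Delaunay{}{A}$ to an empty $(s+d-1)$-sphere $S'$ whose center projects to the barycenter of the standard $s$-simplex in $\Rspace^s$ and to the center of $S$ in $\Rspace^d$, which is exactly your point $x = (y_0, c)$, with your Pythagoras identity playing the role of the paper's observation that $S'$ meets each copy $u_j + \Rspace^d$ in $u_j + S$. Your phrasing via witness points in common intersections of Voronoi domains and the paper's phrasing via empty spheres are dual formulations of the same argument, and your closing remark on the non-generic case is handled identically (and implicitly) in both.
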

\begin{proof}
  Recall that $k+1$ points in $A$ are the vertices of a cell in $\Delaunay{}{A}$ iff there is an empty $(d-1)$-sphere, $S$, that passes through these $k+1$ points and through no other points of $A$.
  The nerve of the corresponding $k+1$ Voronoi $d$-cells is a $k$-simplex.
  
  Following the construction of the chromatic Delaunay mosaic, we copy $S$ to $u_j + S$ for each $j \in \sigma$.
  Let $S'$ be the $(s+d-1)$-sphere in $\Rspace^{s+d}$ whose intersection with $u_j + \Rspace^d$ is $u_j + S$, for every $j \in \sigma$.
  It should be clear that $S'$ exists: its center projected to $\Rspace^s$ is the barycenter of the standard $s$-simplex and projected to $\Rspace^d$ is the center of $S$.
  By construction, $S'$ is empty and passes through the points $u_j + a$ with $a \in S$ and $\chi(a) = j$, and through no other points of $A'$.
  The nerve of the corresponding $(s+d)$-cells in $\Voronoi{}{\chi}$ is again isomorphic to a $k$-simplex, and its projection to $\Rspace^d$ is the $k$-simplex isomorphic to the nerve of the $k+1$ Voronoi $d$-cells we started with.
  The claim follows.
\end{proof}

\subsection{Chromatic Voronoi Tessellations}
\label{sec:3.2}

The \emph{chromatic Voronoi tessellation} of $\chi \colon A \to \sigma$ is the Voronoi tessellation of $A' \subseteq \Rspace^{s+d}$, and we write $\Voronoi{}{\chi} = \Voronoi{}{A'}$.
There is a bijection between the cells of $\Voronoi{}{\chi}$ and $\Delaunay{}{\chi}$, denoted by mapping $\nu$ to $\nu^* \in \Delaunay{}{\chi}$, such that $\dime{\nu} + \dime{\nu^*} = s+d$ and $\mu$ is a face of $\nu$ iff $\nu^*$ is a face of $\mu^*$.
We call $\nu$ \emph{$\tau$-colored} or \emph{$\tau$-colorful} if $\nu^*$ is $\tau$-colored or $\tau$-colorful, respectively.
For each $\tau \subseteq \sigma$, we defined the \emph{$\tau$-membrane} of $\chi$ as the union of the interiors of the $\tau$-colorful cells of $\Voronoi{}{\chi}$, denoted $\Membrane{\tau}$.
Since the interiors of the cells in $\Voronoi{}{\chi}$ partition $\Rspace^{s+d}$, and the interior of each cell belongs to exactly one membrane, the membranes are pairwise disjoint and partition $\Rspace^{s+d}$; see Figure~\ref{fig:chromatic}.

\begin{lemma}[Stratification into Membranes]
  \label{lem:stratification_into_membranes}
  Let $A \subseteq \Rspace^d$ be finite, $\sigma = \{0,1,\ldots,s\}$, and $\chi \colon A \to \sigma$ a coloring.
  \begin{enumerate}
    \item For each non-empty $\tau \subseteq \sigma$, $\Membrane{\tau}$ is a manifold homeomorphic to $\Rspace^{s-t+d}$, with $t = \card{\tau} - 1$.
    \item The collection of $\Membrane{\tau}$ form a stratification of $\Rspace^{s+d}$ with strata of dimension $d$ to $s+d$, in which the $p$-stratum is the disjoint union of all $\Membrane{\tau}$ with $\card{\tau} = s+d-p+1$.
  \end{enumerate}
\end{lemma}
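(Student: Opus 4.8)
The plan is to reduce the two topological claims to an explicit description of the membranes coming from the product structure $\Rspace^{s+d} = \Rspace^s \times \Rspace^d$. Write a point as $(y,z)$ with $y \in \Rspace^s$, $z \in \Rspace^d$, and set $m_j(z) = \min_{a \in A_j} \Edist{z}{a}^2$, the squared distance from $z$ to the nearest point of color $j$ (assume every color is used; an unused color yields an empty class and is discarded). Since the distance from $(y,z)$ to the lifted point $u_j + a$ separates as $\norm{y-u_j}^2 + \Edist{z}{a}^2$, the colors realized by the nearest neighbors of $(y,z)$ in $A'$ form exactly $W(y,z) = \{ j : \norm{y-u_j}^2 + m_j(z) = \min_k [\norm{y-u_k}^2 + m_k(z)]\}$. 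As a Voronoi cell is $\tau$-colorful precisely when the colors of its generators form $\tau$, and the relative interiors of the cells partition $\Rspace^{s+d}$, the membrane is $\Membrane{\tau} = \{(y,z) : W(y,z) = \tau\}$. It is convenient to record this via $\phi_j(y,z) = 2\scalprod{y}{u_j} - \norm{u_j}^2 + \eta_j(z)$, where $\eta_j(z) = \max_{a \in A_j}[2\scalprod{z}{a} - \norm{a}^2]$, so that $\Membrane{\tau} = \{(y,z): \arg\max_j \phi_j = \tau\}$; note $\phi_j$ is affine in $y$ and $\eta_j$ is continuous.

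First I would split off the tie conditions. Let $W_\tau = \mathrm{span}\{u_i - u_{i'} : i,i' \in \tau\}$, a $t$-dimensional subspace because the $u_i$, $i \in \tau$, are affinely independent. For each fixed $z$ the equalities $\phi_i = \phi_{i'}$, $i,i' \in \tau$, form an invertible linear system in the $W_\tau$-component of $y$ with continuous right-hand side, so they fix that component to a continuous value $\psi_\tau(z)$ and leave the $W_\tau^\perp$-component $w$ free; thus they cut out an affine subspace $L_\tau(z)$ of dimension $s-t$. Consequently the tie locus is the graph $\{(\psi_\tau(z)+w, z) : w \in W_\tau^\perp,\, z \in \Rspace^d\}$, homeomorphic to $W_\tau^\perp \times \Rspace^d \cong \Rspace^{s-t+d}$. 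In the coordinates $(w,z)$ the strict domination conditions $\phi_j > \phi_k$ ($j \in \tau$, $k \notin \tau$) lose their $\norm{y}^2$-terms and become affine inequalities in $w$, of the form $\scalprod{w}{v_k} < \beta_k(z)$ with $v_k \in W_\tau^\perp$ independent of $z$ and $\beta_k$ continuous. So $\Membrane{\tau}$ is carried homeomorphically onto $\Omega = \{(w,z) : \scalprod{w}{v_k} < \beta_k(z) \text{ for all } k \notin \tau\}$.

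The crux is to upgrade ``$\Omega$ is an open subset'' to ``$\Omega$ is homeomorphic to $\Rspace^{s-t+d}$''. Here I would use the geometry of the standard simplex: identifying $u_j$ with the vertex $e_j$ gives $v_k = 2(e_k - \bar u_\tau)$, where $\bar u_\tau$ is the barycenter of $\{u_i : i \in \tau\}$, and a short computation yields the Gram matrix $\scalprod{e_k - \bar u_\tau}{e_{k'} - \bar u_\tau} = \delta_{kk'} + \tfrac{1}{\card{\tau}}$ over $k,k' \notin \tau$. Hence $\xi = \sum_{k \notin \tau}(e_k - \bar u_\tau) \in W_\tau^\perp$ satisfies $\scalprod{\xi}{v_k} > 0$ for every $k \notin \tau$: all the $v_k$ lie strictly on one side of a hyperplane of $W_\tau^\perp$. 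Writing $w = r\xi + w^\perp$ with $w^\perp \perp \xi$, each constraint reads $r < \gamma_k(w^\perp,z)$, so $\Omega = \{(r,w^\perp,z) : r < \Gamma(w^\perp,z)\}$ is the region strictly below the graph of the continuous function $\Gamma = \min_{k \notin \tau}\gamma_k$. In particular every fiber is non-empty, so $\Membrane{\tau} \neq \emptyset$, and the shear $(r,w^\perp,z) \mapsto (r - \Gamma(w^\perp,z),\, w^\perp,\, z)$ is a homeomorphism onto a half-space, giving $\Membrane{\tau} \cong \Omega \cong \Rspace^{s-t+d}$. The case $\tau = \sigma$ is degenerate ($W_\tau^\perp = \{0\}$, no constraints) and gives the graph of $\psi_\sigma$ over $\Rspace^d$. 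This settles part (1); the half-space observation is the step I expect to be the main obstacle, since without it $\Omega$ is only some open set.

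For part (2), the dimension of $\Membrane{\tau}$ is $s - t + d = s + d + 1 - \card{\tau}$, so $\Membrane{\tau}$ has dimension $p$ exactly when $\card{\tau} = s + d - p + 1$; as $\card{\tau}$ runs over $1,\ldots,s+1$ the dimension runs from $s+d$ down to $d$, which is the claimed grouping into $p$-strata. The membranes are disjoint and cover $\Rspace^{s+d}$ because each relative interior of a Voronoi cell lies in exactly one of them. It then remains to verify the frontier condition $\closure{\Membrane{\tau}} = \bigsqcup_{\tau' \supseteq \tau}\Membrane{\tau'}$: the inclusion ``$\subseteq$'' follows from upper semicontinuity of $\arg\max$ of the continuous $\phi_j$ (a limit of points with $\arg\max = \tau$ has $\arg\max \supseteq \tau$), and ``$\supseteq$'' from a perturbation argument—given a point of $\Membrane{\tau'}$, move $y$ within $L_\tau(z)$, off the smaller locus $L_{\tau'}(z)$, in a direction making the colors of $\tau$ strictly beat those of $\tau'\setminus\tau$ while keeping $\sigma\setminus\tau'$ losing, which is possible since the $u_j$ are affinely independent. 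Grouping membranes of equal cardinality then exhibits $\{\Membrane{\tau}\}$ as a stratification of $\Rspace^{s+d}$ with strata of dimensions $d$ through $s+d$.
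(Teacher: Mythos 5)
Your proof is correct, and it reaches the conclusion by a genuinely different route than the paper. The paper also starts from the Pythagorean splitting over $\Rspace^{s+d} = \Rspace^s \times \Rspace^d$, but it argues slice-wise: for each $w \in \Rspace^d$ it exhibits the unique point $z(w)$ of the slice $w + \Rspace^s$ that is equidistant from the nearest lifted point of every color, observes that the slice meets the membrane decomposition in the translate $z(w) + V(\sigma)$ of the fixed Voronoi fan of the simplex vertices $u_0, \ldots, u_s$, and lets $w$ vary; since $z$ is continuous, each $\Membrane{\tau}$ becomes the "graph" over $\Rspace^d$ of the relatively open cone of $V(\sigma)$ dual to $\tau$, which yields both claims at once. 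The unstated fact this slicing needs is that a power diagram of $s+1$ affinely independent sites is a translate of their Voronoi diagram---your tie-locus computation (solving $\phi_i = \phi_{i'}$ as an invertible linear system on $W_\tau$) is essentially a proof of exactly that fact. You instead treat each $\tau$ separately and globally: membership becomes an argmax condition on functions affine in $y$, the tie locus is a graph over $W_\tau^\perp \times \Rspace^d$, and your key step---the Gram identity $\scalprod{e_k - \bar u_\tau}{e_{k'} - \bar u_\tau} = \delta_{kk'} + \tfrac{1}{\card{\tau}}$ placing all constraint normals in an open half-space---turns the membrane into the open hypograph of a continuous function, which a shear carries onto a half-space. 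What your route buys: explicit homeomorphisms, non-emptiness of every membrane, a uniform treatment of all $\tau$ (the paper only does $\tau = \sigma$ explicitly), and a verification of the frontier condition $\closure{\Membrane{\tau}} \supseteq \Membrane{\tau'}$ for $\tau \subseteq \tau'$, which is what "stratification" formally requires and which the paper never addresses. What the paper's route buys: economy, and a transparent picture of the stratification as a continuously translated copy of the cone fan $V(\sigma)$ times $\Rspace^d$. One small polish: in your frontier argument, the perturbation direction need not be justified by a vague appeal to affine independence---your own Gram computation supplies it, e.g.\ $\delta = -\sum_{k \in \tau' \setminus \tau} (e_k - \bar u_\tau)$ keeps the ties within $\tau$, makes $\tau$ strictly beat $\tau' \setminus \tau$, and preserves the strict inequalities against $\sigma \setminus \tau'$ for small steps.
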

\begin{proof}
  We begin with $\tau = \sigma$.
  Let $w \in \Rspace^d$ and consider $w + \Rspace^s$, which is an $s$-plane parallel to $\Rspace^s$ and therefore orthogonal to $\Rspace^d$.
  By Pythagoras' theorem, the squared distance between points $x \in w+\Rspace^s$ and $y \in \Rspace^d$ is $\Edist{x}{w}^2 + \Edist{w}{y}^2$.
  Letting $a$ be the point in $A$ closest to $x$, this implies that $a$ is the closest point in $A$ to any point in $w + \Rspace^d$.
  Similarly, if $a_j'$ is the point in $A_j'$ closest to $x$, then $a_j'$ is the closest point in $A_j'$ to any point in $w+\Rspace^s$.
  There is a unique point $z(w) \in w+\Rspace^s$ at equal distance to $a_0', a_1', \ldots, a_s'$.
  Hence, $z(w) \in \Membrane{\sigma}$ and it is indeed the only point of $w+\Rspace^s$ in $\Membrane{\sigma}$.
  It follows that $\Membrane{\sigma}$ is the image of $z \colon \Rspace^d \to \Rspace^{s+d}$ defined by mapping $w$ to $z(w)$.
  Note that $z$ is continuous, so $\Membrane{\sigma}$ is homeomorphic to $\Rspace^d$.
  It is the stratum of the lowest dimension, $d$, in the claimed stratification.
  
  To describe the remainder of the stratification, let $V(\sigma)$ be the Voronoi tessellation of $u_0, u_1, \ldots, u_s$ in $\Rspace^s$.
  Since the $u_j$ are the vertices of the standard $s$-simplex, this tessellation consists of a vertex at $0 \in \Rspace^s$, $s+1$ half-lines emanating from $0$, $\binom{s+1}{2}$ $2$-dimensional wedges connecting the half-lines in pairs, etc.
  Returning to $w+\Rspace^s$, we observe that it slices the stratification of $\Rspace^{s+d}$ in a translate of this $s$-dimensional Voronoi tessellation, $z(w) + V(\sigma)$.
  Varying $w$ over all points of $\Rspace^d$, we get the claimed stratification of $\Rspace^{s+d}$.
\end{proof}

\subsection{Overlay of Mono-chromatic Voronoi Tessellations}
\label{sec:3.3}

Related to the strata are the overlays of tessellations.
Given $A \subseteq \Rspace^d$, $\sigma = \{0,1,\ldots,s\}$, and $\chi \colon A \to \{0,1,\ldots,s\}$, the \emph{overlay} of the $s+1$ mono-chromatic Voronoi tessellations, denoted $\Overlay{}{A_j}{j \in \sigma}$, is the decomposition of $\Rspace^d$ obtained by drawing the Voronoi cells of dimension at most $d-1$ on top of each other; see Figure~\ref{fig:overlay}.
More formally, each $d$-dimensional cell in the overlay is the common intersection of $s+1$ $d$-cells, one in each $\Voronoi{}{A_j}$ for $j \in \sigma$, and the overlay consists of these $d$-dimensional cells and their faces.
Even if the points in $A$ are in general position, the overlay is not necessarily a simple decomposition of $\Rspace^d$.
\begin{figure}[htb]
  \centering \vspace{0.1in}
  \includegraphics[width=.6\textwidth]{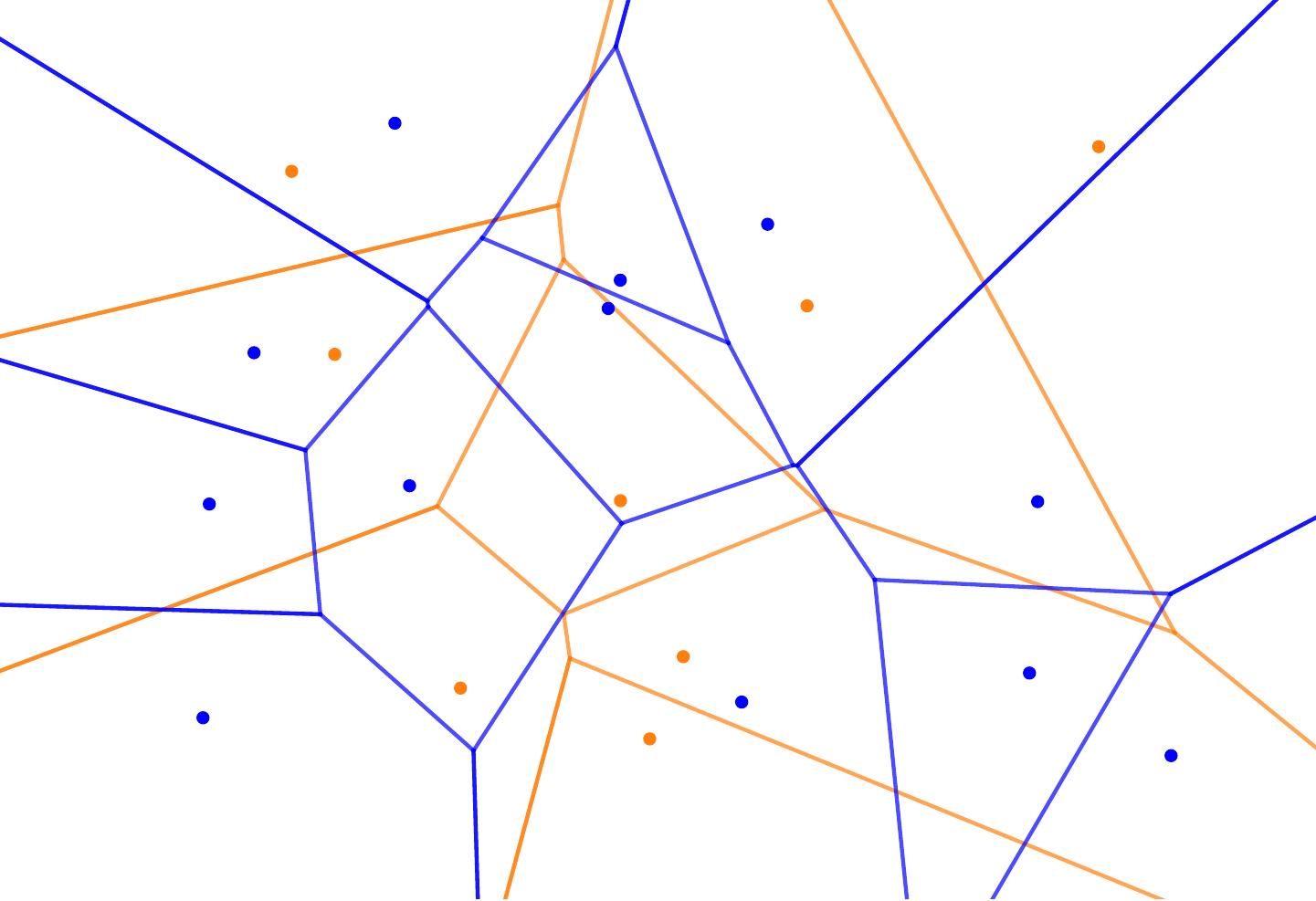}
  \caption{The overlay of a blue and a orange Voronoi tessellation in the plane.
  In the generic case, each of its vertices is either a vertex of a mono-chromatic tessellation, which has degree $3$, or the crossing of two edges, which has degree $4$.}
  \label{fig:overlay}
\end{figure}
\begin{lemma}[Membranes and Overlays]
  \label{lem:membranes_and_overlays}
  Let $A \subseteq \Rspace^d$ be finite, $\chi \colon A \to \{0,1,\ldots,s\}$ a coloring, and $A_j = \chi^{-1} (j)$ for $0 \leq j \leq s$.
  For each $\tau \subseteq \sigma$, $\Overlay{}{A_j}{j \in \tau}$ is the projection of the $\tau$-membrane, $\Membrane{\tau}$, to $\Rspace^d$.
\end{lemma}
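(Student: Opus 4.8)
The plan is to study the projection $p\colon\Rspace^{s+d}=\Rspace^s\times\Rspace^d\to\Rspace^d$ restricted to $\Voronoi{}{\chi}=\Voronoi{}{A'}$ one vertical fiber at a time, exactly as in the proof of Lemma~\ref{lem:stratification_into_membranes}. For $y\in\Rspace^d$ write $\mathrm{fib}_y=\{(\xi,y):\xi\in\Rspace^s\}$. By Pythagoras, the squared distance from $(\xi,y)$ to a point $u_j+a\in A_j'$ is $\Edist{\xi}{u_j}^2+\Edist{y}{a}^2$, so among the points of color $j$ the nearest one to $(\xi,y)$ is $u_j+a_j(y)$, where $a_j(y)$ is a point of $A_j$ nearest to $y$, at squared distance $g_j(\xi,y)=\Edist{\xi}{u_j}^2+d_j(y)^2$ with $d_j(y)=\Edist{y}{a_j(y)}$. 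The unique $\tau$ with $(\xi,y)\in\Membrane{\tau}$ (the \emph{color-type} of $(\xi,y)$) is therefore the set of colors $j$ attaining $\min_j g_j(\xi,y)$, and I will compare the $y$-images of these color-type regions with the cells of the overlay.

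\emph{Step 1 (fiberwise structure).} Since $g_j(\xi,y)=\norm{\xi}^2-2\scalprod{\xi}{u_j}+\norm{u_j}^2+d_j(y)^2$ and $\norm{\xi}^2$ is common to all $j$, the colors minimizing $g_j$ are those maximizing $\ell_j(\xi)=2\scalprod{\xi}{u_j}-\norm{u_j}^2-d_j(y)^2$. Because $u_0,\ldots,u_s$ are affinely independent, there is a unique pair $(\mu,\beta)\in\Rspace^s\times\Rspace$ with $\norm{u_j}^2+d_j(y)^2=2\scalprod{u_j}{\mu}+\beta$ for all $j$, and substituting gives $\ell_j(\xi)=2\scalprod{\xi-\mu}{u_j}-\beta$. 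Hence the color-type of $(\xi,y)$ is the set of vertices $u_j$ maximizing $\scalprod{\xi-\mu(y)}{u_j}$, i.e.\ $\tau$ is the color-type iff $\xi-\mu(y)$ lies in the relative interior of the normal cone of the face $\conv{\{u_j:j\in\tau\}}$ of the standard simplex. This re-derives the fiberwise picture of Lemma~\ref{lem:stratification_into_membranes}: the decomposition of $\mathrm{fib}_y$ into color-types is the translate by $\mu(y)$ of the Voronoi tessellation $V(\sigma)$, equivalently the normal fan of the simplex. Since every non-empty subset of the vertices of a simplex spans a face, this cone is non-empty for every $\tau$; in particular each fiber meets $\Membrane{\tau}$, so $p(\Membrane{\tau})=\Rspace^d$.

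\emph{Step 2 (identifying the projected cells).} A $\tau$-colorful cell $\nu$ of $\Voronoi{}{\chi}$ is generated by a set $\{u_j+a:j\in\tau,\ a\in N_j\}$ with $\emptyset\neq N_j\subseteq A_j$, and $\interior{\nu}$ consists of the $(\xi,y)$ whose nearest points in $A'$ are exactly these. By Step~1 the $\xi$-part of this condition is, for each admissible $y$, the non-empty translated open normal cone of the face $\tau$, while the $y$-part says precisely that $N_j$ is the set of points of $A_j$ nearest to $y$, for every $j\in\tau$ — that is, $y$ lies in the relative interior of the cell of $\Voronoi{}{A_j}$ at which exactly the points of $N_j$ are nearest. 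Projecting forgets the $\xi$-cone, so $p(\interior{\nu})$ is the intersection over $j\in\tau$ of these mono-chromatic relative interiors, i.e.\ the relative interior of a cell of $\Overlay{}{A_j}{j\in\tau}$. Conversely each overlay cell is determined by a choice of the sets $N_j$ and is the projection of a unique $\tau$-colorful cell; on $d$-cells (one nearest point per color, $N_j=\{a_j\}$) this is dimension-preserving, with $\dime{\nu}=(s+d)-(\card{\tau}-1)$ splitting into an $(s-\card{\tau}+1)$-dimensional cone over a $d$-dimensional overlay cell. This bijection between the cells of $\Overlay{}{A_j}{j\in\tau}$ and the projected color-type cells proves the claim.

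The main obstacle is Step~1: a priori a strongly dominating color (a very small $d_k(y)$, i.e.\ $y$ close to $A_k$) could swallow the type $\tau$ and make some fiber miss $\Membrane{\tau}$, which would make $p(\Membrane{\tau})$ a proper subset of $\Rspace^d$ and break the match with the overlay. The coplanarity argument — affine independence of the simplex vertices forces the weighted nearest-color diagram in each fiber to be a \emph{pure translate} of the unweighted normal fan of the simplex, so its combinatorics never degenerate and every $\tau$ survives — is exactly what rules this out and is the crux of the proof. The remainder is bookkeeping: matching the generator sets $N_j$ with the dual mono-chromatic Voronoi cells and checking that relative interiors correspond, which is routine given the fiberwise description.
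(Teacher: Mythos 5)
Your proof is correct, and it takes a genuinely different route from the paper's. The paper proves the case $\tau=\sigma$ first: it reuses the map $z$ from Lemma~\ref{lem:stratification_into_membranes}, so that the projection $\pi\colon\Membrane{\sigma}\to\Rspace^d$ is a piecewise-linear homeomorphism, and identifies the linear pieces of $\Membrane{\sigma}$ with the $d$-cells of the overlay by an equidistance argument (neighboring overlay cells change one generator $a_j$, hence land in different linear pieces); general $\tau$ is then handled by passing to $\Voronoi{}{\chi|\tau}$ and invoking Lemma~\ref{lem:sub-chromatic_Delaunay_subcomplexes} via an extrusion argument. You instead analyze the projection fiber by fiber and treat all $\tau$ uniformly; your crux --- that the additively weighted nearest-color diagram in each fiber $\mathrm{fib}_y$ is a pure translate $\mu(y)+V(\sigma)$ of the normal fan, because affine independence of $u_0,\ldots,u_s$ absorbs the weights $d_j(y)^2$ into a translation --- is an explicit justification of what the proof of Lemma~\ref{lem:stratification_into_membranes} only asserts (that the slices of the stratification are translates of $V(\sigma)$), and it directly rules out the failure mode where some fiber misses $\Membrane{\tau}$. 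Your route avoids Lemma~\ref{lem:sub-chromatic_Delaunay_subcomplexes} and the extrusion step entirely, and it delivers more than the statement asks for: an explicit bijection between the $\tau$-colorful cells of $\Voronoi{}{\chi}$ and the cells of $\Overlay{}{A_j}{j\in\tau}$, matching relative interiors with the correct dimension count; the paper's route is shorter because it recycles the two preceding lemmas. The one point you should not leave as ``bookkeeping'' in a polished write-up is the identification of the non-empty common intersections of relative interiors of mono-chromatic Voronoi faces with the relative interiors of the overlay's cells as defined in Section~\ref{sec:3.3} (i.e., that the common refinement's strata are exactly the faces of the overlay's $d$-cells); this is standard polyhedral-complex material, and the paper glosses a comparable step, but it is precisely where your Step~2 connects to the overlay's definition.
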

\begin{proof}
  We begin with $\tau = \sigma$.
  By Lemma~\ref{lem:stratification_into_membranes}, $\Membrane{\sigma}$ is a manifold of dimension $d$, and in the proof of this lemma we learn that the orthogonal projection, $\pi \colon \Membrane{\sigma} \to \Rspace^d$, is a homeomorphism.
  Indeed, $\pi^{-1}$ is the restriction of $z \colon \Rspace^d \to \Rspace^{s+d}$ defined there.
  Since $\Membrane{\sigma}$ is decomposed into cells of $\Voronoi{}{\chi}$, $z$ is piecewise linear, so it suffices to prove that the linear pieces are the images of the cells in $\Overlay{}{A_j}{j \in \sigma}$.
  
  Let $\nu_j$ be a $d$-cell of $\Voronoi{}{A_j}$ and write $a_j \in A_j$ for the point that generates $\nu_j$, for $0 \leq j \leq s$.
  Assume that $\nu = \nu_0 \cap \nu_1 \cap \ldots \cap \nu_s$ has non-empty interior, so it is a $d$-cell of the overlay.
  Correspondingly, the image of every point $x \in \nu$, $z(x) = \pi^{-1}(x)$, is equidistant from the points $u_j + a_j$, for $0 \leq j \leq s$.
  It follows that the image of $\nu$ is a subset of a linear piece in $\Membrane{\sigma}$.
  For every neighboring $d$-cell of $\nu$ in the overlay, we change one of the $a_j$, so their images belong to different linear pieces of $\Membrane{\sigma}$.
  This implies that the image of $\nu$ \emph{is} a linear piece of $\Membrane{\sigma}$, as required.

  \medskip
  To generalize, let $\tau \subseteq \sigma$ and use the above argument  to conclude that $\Overlay{}{A_j}{j \in \tau}$ is the projection of the $\tau$-membrane to $\Rspace^d$.
  Recall that $\Voronoi{}{\chi | \tau}$ decomposes $\Rspace^{t+d}$, and by Lemma~\ref{lem:sub-chromatic_Delaunay_subcomplexes}, the extrusion of the $\tau$-membrane in $\Voronoi{}{\chi | \tau}$ along the remaining $s-t$ coordinate directions in $\Rspace^{s+d}$ contains the $\tau$-membrane in $\Voronoi{}{\chi}$.
  Moreover, the projections of the two $\tau$-membranes---one in $\Voronoi{}{\chi | \tau}$ and the other in $\Voronoi{}{\chi}$---to $\Rspace^d$ are identical, which implies the claim.
\end{proof}

\section{Counting Cells}
\label{sec:4}

In this section, we are interested in the size of the chromatic Delaunay mosaic or, equivalently, 
of the overlays between the mono-chromatic Voronoi tessellations.
We focus on extremal questions, in which we minimize or maximize over locally finite sets and their colorings, but we also consider random colorings.

\subsection{Few Spherical {\it k}-sets Imply Small Expected Overlays}
\label{sec:4.1}

Let $A$ be a set of $n$ points in $\Rspace^d$.
We call a subset of $k \leq n$ points a \emph{spherical $k$-set} of $A$ if there is a sphere that separates the $k$ from the remaining $n-k$ points.
Note that this differs from the classic notion of a $k$-set, for which there is a hyperplane that separates the $k$ points of the $k$-set from the remaining $n-k$ points.
In this section, we relate the number of spherical $k$-sets with the expected size of the overlay of mono-chromatic Voronoi tessellations for random colorings of $A$.
Specifically, we prove the following lemma.
\begin{lemma}[Spherical $k$-sets and Overlay]
  \label{lem:spherical_k-set_and_overlay}
  Let $c, d, e$ be positive constants, and $A$ a set of $n$ points in $\Rspace^d$ such that for every $1 \leq k \leq n$, the number of spherical $k$-sets is $O(k^c n^e)$.
  Let furthermore $s \geq 0$ be a constant, let $\sigma = \{0,1,\ldots,s\}$, and write $A_j = \chi^{-1} (j)$, in which $\chi \colon A \to \sigma$ is a random coloring.
  Then the expected size of $\Overlay{}{A_j}{j \in \sigma}$ is $O(n^e)$.
\end{lemma}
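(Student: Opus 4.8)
The plan is to count the $\sigma$-colorful cells of $\Delaunay{}{\chi}$. By Lemma~\ref{lem:membranes_and_overlays} the overlay is the projection of the $\sigma$-membrane $\Membrane{\sigma}$, and by the proof of that lemma the projection restricts to a homeomorphism on $\Membrane{\sigma}$ that maps cells to cells; combined with the cell-to-cell duality of Section~\ref{sec:3.2}, the cells of $\Overlay{}{A_j}{j\in\sigma}$ are in bijection with the $\sigma$-colorful cells of $\Delaunay{}{\chi}$, so $\Exp{\card{\Overlay{}{A_j}{j\in\sigma}}}$ equals the expected number of the latter. Assuming $A$ in general position (so $A'$ is generic in $\Rspace^{s+d}$ and $\Delaunay{}{\chi}$ is simplicial), each colorful cell is a simplex on at most $s+d+1$ vertices. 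I would first reformulate such a simplex through its circumsphere: following the proof of Lemma~\ref{lem:projection_to_Delaunay_mosaic}, the circumscribing $(s+d-1)$-sphere $S'$ meets each $u_j+\Rspace^d$ in a $(d-1)$-sphere with a \emph{common} center $c\in\Rspace^d$ and radius $r_j$; its color-$j$ vertices are the points of $A_j$ on $\boundary{\Ball{r_j}{c}}$, and emptiness of $S'$ is equivalent to $A_j\cap\interior{\Ball{r_j}{c}}=\emptyset$ for every $j$. Throughout, $\chi$ assigns to each point of $A$ an independent, uniformly random color in $\sigma$; I take $s\ge 1$, so $\frac{s}{s+1}<1$ (for $s=0$ the overlay is a single Voronoi tessellation and there is nothing to overlay).

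The key geometric–probabilistic estimate exploits the \emph{largest} radius. Set $R=\max_j r_j$, attained by a color $j^\ast$ (generically unique), and let $I=A\cap\interior{\Ball{R}{c}}$ with $k=\card{I}$. Choosing $R'$ strictly between $\max_{a\in I}\Edist{a}{c}$ and $R$, the sphere $\boundary{\Ball{R'}{c}}$ separates $I$ from $A\setminus I$, so $I$ is a spherical $k$-set. The crucial observation is that for this simplex to occur under $\chi$, \emph{every} point of $I$ must avoid color $j^\ast$: a color-$j^\ast$ point strictly inside $\Ball{R}{c}=\Ball{r_{j^\ast}}{c}$ would violate the emptiness condition for color $j^\ast$. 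Since the $k$ points of $I$ are colored independently, this happens with probability at most $\left(\frac{s}{s+1}\right)^{k}$, and imposing further the correct colors on the (boundedly many) vertices only decreases the probability. This is the geometric decay that will beat the polynomial growth of spherical $k$-sets.

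Decoupling coloring from geometry, I enumerate \emph{potential} colorful simplices by their coloring-independent data and only then insert the probability above. Grouping by $k$, I bound the number of potential simplices whose interior spherical set has size $k$: each such simplex is charged to its spherical $k$-set $I$; conversely its vertices of colors $j\neq j^\ast$ lie in $\interior{\Ball{R}{c}}$, hence in $I$, while its color-$j^\ast$ vertices lie on $\boundary{\Ball{R}{c}}$. Thus the simplex is determined by $I$ together with a choice of at most $s+d$ vertices from $I$, after which $c$, $R$, and the boundary vertices are generically pinned down; so at most $O(k^{s+d})$ simplices share a given $I$, and the count for fixed $k$ is $O\!\left(k^{s+d}\right)$ times the number of spherical $k$-sets, which is $O\!\left(k^{c+s+d}n^{e}\right)$ by hypothesis. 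Multiplying by the probability and summing over $k$ gives
\[
  \Exp{\card{\Overlay{}{A_j}{j\in\sigma}}}
  \ \le\ \sum_{k\ge 0}\left(\frac{s}{s+1}\right)^{k} O\!\left(k^{c+s+d}\,n^{e}\right)
  \ =\ O(n^{e})\sum_{k\ge 0} k^{c+s+d}\left(\frac{s}{s+1}\right)^{k}
  \ =\ O(n^{e}),
\]
the series converging because $\frac{s}{s+1}<1$ and $c,s,d$ are constants.

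The main obstacle is precisely this decoupling, together with the multi-radius nature of colorful simplices: a colorful simplex carries $s+1$ different radii $r_0,\ldots,r_s$ around $c$, and the set of colorful simplices itself depends on the random coloring, so one cannot naively sum over a fixed family. The resolution is to let the single geometric quantity $k=\card{I}$ do double duty---controlling both the multiplicity against spherical $k$-sets and, via the must-avoid-color-$j^\ast$ condition, the $\left(\frac{s}{s+1}\right)^{k}$ suppression---so that the polynomial growth $O(k^{c}n^{e})$ of spherical $k$-sets is absorbed by geometric decay. I expect the delicate points to be verifying that $I$ is genuinely a spherical $k$-set (handled by the separating sphere of radius $R'$) and that the multiplicity per spherical $k$-set is only polynomial in $k$ (handled by genericity, which pins down the center and boundary once the inside vertices are fixed).
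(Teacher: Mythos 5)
Your overall strategy---reduce to colorful cells, use the \emph{largest} sphere in the stack, and beat the polynomial number of spherical $k$-sets with the exponential factor $(s/(s+1))^{k}$ coming from the points inside that sphere---is the same as the paper's, which counts overlay vertices (equivalently, full-dimensional colorful cells). The gap is in your multiplicity count. You charge a colorful simplex to $I = A \cap \interior{\Ball{R}{c}}$, the points \emph{strictly} inside the largest ball, and claim that $I$ together with the set $W \subseteq I$ of its vertices of colors $j \neq j^\ast$ generically pins down $c$, $R$, and the color-$j^\ast$ vertices, so that only $O(k^{s+d})$ simplices share a given $I$. This is false. The stack of concentric spheres has $s+d+1$ degrees of freedom ($c$ and the $s+1$ radii), while $W$ imposes only $|W| = s+d+1-i_{j^\ast}$ constraints, where $i_{j^\ast}\geq 1$ is the number of color-$j^\ast$ vertices; so $i_{j^\ast}$ degrees of freedom remain, and the color-$j^\ast$ vertices are drawn from $A \setminus I$, whose admissible choices are controlled by $n$, not by $k$. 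Concretely, take $d=2$, two colors, $I$ a cluster of $k = O(1)$ points near the origin, and $A \setminus I$ a set of $n-k$ points in convex position near a large circle centered at the origin. For $W=\{w\}$ with $w \in I$, the outer set has $n-k-2$ Delaunay triangles and, for a suitable perturbation, each of their circumcircles contains the central cluster and no other outer point; hence $\Theta(n)$ potential colorful tetrahedra (three outer color-$0$ vertices, one inner color-$1$ vertex $w$) are charged to the same pair $(I,W)$, not $O(k^{3})$. Propagating this loss through your sum gives at best $O(n^{e+1})$ (worse in higher dimensions), which does not prove the lemma.

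The repair is exactly the paper's charging: define the spherical $k$-set as the set of points \emph{on or inside} the largest sphere, i.e.\ use the closed ball. Then all $s+d+1$ vertices of the simplex, including the color-$j^\ast$ ones, lie in the $k$-set, so the number of potential simplices charged to one spherical $k$-set is at most $\binom{k}{s+d+1}(s+1)^{s+d+1} = O(k^{s+d+1})$: choose the vertices from the $k$-set and a coloring of them, after which the stack of spheres is generically determined. The probability estimate then needs the matching adjustment: only the $k - i_{j^\ast} \geq k-d-1$ points strictly inside the largest sphere must avoid color $j^\ast$, giving the factor $(s/(s+1))^{k-d-1}$, which still decays exponentially, and the rest of your summation goes through unchanged.
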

\begin{proof}
  We assume that the points in $A$ are in general position and write $A_j = \chi^{-1} (j)$.
  Suppose we pick $s+1$ cells, one from each $\Voronoi{}{A_j}$, and write $i_j - 1$ for their co-dimensions.
  The common intersection of the $s+1$ cells is either empty or a cell of co-dimension $\sum_{j=0}^s(i_j - 1)$.
  This is a vertex only if $\sum_{j=0}^s (i_j-1) = d$ or, equivalently, $\sum_{j=0}^s i_j = d+s+1$.
  To bound the expected size of the overlay, we bound the expected number of such vertices in the overlay, which is the sum of their probabilities to belong to the overlay.
  
  \medskip
  Fix $d+s+1$ points and a coloring $\chi \colon A \to \{0, 1, \ldots, s\}$ such that every color is assigned to at least one of the $d+s+1$ points.
  Writing $i_j$ for the number of points with color $j$, we have $\sum_{j=0}^s i_j = d+s+1$ and $1 \leq i_j \leq d+1$ for each $j$.
  Let $E_j$ be the set of points $y \in \Rspace^d$ at equal distance to the $i_j$ points with color $j$; it is a plane of co-dimension $i_j - 1$.
  Since $\sum_{j=0}^s (i_j - 1) = d$ and the $d+s+1$ points are in general position, the common intersection of the $E_j$ is a point $x \in \Rspace^d$.
  This point is a vertex of the overlay iff there is a stack of spheres, $S_0, S_1, \ldots, S_s$, with common center, $x$, such that $S_j$ passes through the $i_j$ points with color $j$, and all other points in $A_j = \chi^{-1} (j)$ lie outside $S_j$.
  Suppose that $S_0$ is the largest of the $s+1$ spheres.
  Let $k$ be the number of points on or inside $S_0$, note that this is a spherical $k$-set, and recall that there are at most $O(k^c n^e)$ such sets by assumption.
  Other than the $i_0 \leq d+1$ points on $S_0$, all points in the spherical $k$-set must have color different from $0$.
  The probability of this is $s/(s+1)$ to the power $k-i_0 \geq k-d-1$.
  The number of possible overlay vertices whose largest sphere of the corresponding stack of spheres separates the same spherical $k$-set is at most $\binom{k}{d+s+1} (s+1)^{d+s+1}$. This is the product of the number of subsets of size $d+s+1$ and the number of different colorings of such a set.
  Writing $X$ for the number of vertices in the overlay, we thus get
  \begin{align}
    \Exp{X} &< \sum\nolimits_{k=d+s+1}^n O(k^c n^e) \cdot \tbinom{k}{d+s+1} (s+1)^{d+s+1} \cdot \left( \frac{s}{s+1} \right)^{k-d-1} \\
    &< O(n^e) \cdot \sum\nolimits_{k=0}^\infty \frac{(s+1)^{2d+s+2}}{s^{d+1}} \cdot k^{c+d+s+1} \cdot \left( \frac{s}{s+1} \right)^{k} .
  \end{align}
  The first factor within the latter sum is constant, the second is a constant degree polynomial, and the last factor is an exponential that vanishes as $k$ goes to infinity.
  Because of the exponential decay, the sum converges to a constant that depends on $c$, $d$, and $s$ but not on $n$.
  It follows that the number of vertices in the overlay is $O(n^e)$.
  
  \medskip
  Observe that every vertex of the overlay belongs to only a constant number of cells of dimension~$1$~to~$d$.
  Every such cell has at least one vertex, which implies that the number of cells of any dimension in the overlay is $O(n^e)$.
\end{proof}

As originally proved by Lee \cite{Lee82}, the number of spherical $k$-sets of $n$ points in $\Rspace^2$ is less than $2kn$.
The expected size of the overlay of the mono-chromatic Voronoi tessellations for a random coloring in $\Rspace^2$ is therefore $O(n)$.
To get a result for general dimensions, we note that the spherical $k$-sets in $\Rspace^d$ correspond to (linear) $k$-sets in $\Rspace^{d+1}$.
For the latter, Clarkson and Shor \cite{ClSh89} proved that the number of $\ell$-sets, for $\ell = 1, 2, \ldots, k$, is $O( k^{\ceiling{(d+1)/2}} n^{\floor{(d+1)/2}} )$.
Lemma~\ref{lem:spherical_k-set_and_overlay} thus implies
\begin{theorem}[Overlay Size for Random Coloring]
  \label{thm:overlay_size_for_random_coloring}
  Let $d$ and $s$ be constants, let $A$ be a set of $n$ points in $\Rspace^d$, let $\sigma = \{0,1,\ldots,s\}$, and write $A_j = \chi^{-1} (j)$, in which $\chi \colon A \to \sigma$ is a random coloring.
  Then the expected number of cells in $\Overlay{}{A_j}{j \in \sigma}$ is $O( n^{\ceiling{d/2}})$.
\end{theorem}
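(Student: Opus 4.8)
The plan is to derive the theorem as an immediate consequence of Lemma~\ref{lem:spherical_k-set_and_overlay}. That lemma already converts any polynomial bound on the number of spherical $k$-sets into a bound on the expected overlay size, so the only thing left to supply is a verification that $A$ satisfies the spherical $k$-set hypothesis with the right exponents. Concretely, I would show that the number of spherical $k$-sets of any $n$ points in $\Rspace^d$ is $O(k^{\ceiling{(d+1)/2}} n^{\floor{(d+1)/2}})$, and then read off the conclusion by taking $c = \ceiling{(d+1)/2}$ and $e = \floor{(d+1)/2}$ in the lemma.

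The heart of the argument is a lifting reduction from spheres in $\Rspace^d$ to hyperplanes in $\Rspace^{d+1}$. First I would map each point $p \in \Rspace^d$ to $\hat{p} = (p, \norm{p}^2) \in \Rspace^{d+1}$ on the standard paraboloid, and write $\hat{A}$ for the image of $A$. A $(d-1)$-sphere in $\Rspace^d$ centered at $c$ of radius $r$ is the zero set of $\norm{x}^2 - 2\scalprod{c}{x} + \gamma$ with $\gamma = \norm{c}^2 - r^2$, and under the substitution $x_{d+1} = \norm{x}^2$ this becomes the linear equation $x_{d+1} - 2\scalprod{c}{x} + \gamma = 0$. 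Hence the sphere corresponds to a hyperplane in $\Rspace^{d+1}$, and a point lies strictly inside the sphere iff its lift lies strictly below the hyperplane. Consequently, a set of $k$ points is separated from the remaining $n-k$ points by a sphere in $\Rspace^d$ exactly when the corresponding $k$ lifted points are separated from the rest by a hyperplane in $\Rspace^{d+1}$; that is, spherical $k$-sets of $A$ are in bijection with (linear) $k$-sets of $\hat{A}$.

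With the reduction in place I would invoke the bound of Clarkson and Shor \cite{ClSh89}: the total number of $\ell$-sets of $n$ points in $\Rspace^{d+1}$, summed over $\ell = 1, \ldots, k$, is $O(k^{\ceiling{(d+1)/2}} n^{\floor{(d+1)/2}})$. Since the number of exactly-$k$-sets is dominated by this cumulative count, the number of spherical $k$-sets of $A$ is $O(k^{\ceiling{(d+1)/2}} n^{\floor{(d+1)/2}})$, which is precisely the hypothesis of Lemma~\ref{lem:spherical_k-set_and_overlay} with $c = \ceiling{(d+1)/2}$ and $e = \floor{(d+1)/2}$. The lemma then yields expected overlay size $O(n^{\floor{(d+1)/2}})$. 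To finish I would apply the elementary identity $\floor{(d+1)/2} = \ceiling{d/2}$, valid for every integer $d$, to rewrite the exponent and obtain the claimed $O(n^{\ceiling{d/2}})$.

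The substantive content is entirely front-loaded into Lemma~\ref{lem:spherical_k-set_and_overlay}; once that is granted, the remaining steps are a standard lifting correspondence, a citation, and an exponent identity, none of which I expect to be difficult. The one point that warrants care is confirming that the Clarkson--Shor estimate I quote bounds the \emph{cumulative} number of $\le k$-sets rather than only the count of exactly-$k$-sets, since it is this cumulative form that matches the per-$k$ polynomial bound required by the lemma. As the exactly-$k$ count is dominated by the cumulative count, no obstruction arises, and the theorem follows.
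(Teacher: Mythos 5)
Your proposal is correct and takes essentially the same route as the paper: the paper also deduces the theorem from Lemma~\ref{lem:spherical_k-set_and_overlay} by observing that spherical $k$-sets in $\Rspace^d$ correspond to linear $k$-sets in $\Rspace^{d+1}$, citing the Clarkson--Shor bound $O(k^{\ceiling{(d+1)/2}} n^{\floor{(d+1)/2}})$ on the cumulative number of $\ell$-sets, and using $\floor{(d+1)/2} = \ceiling{d/2}$. The only difference is that you spell out the paraboloid lifting explicitly, which the paper leaves implicit.
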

This bound is asymptotically tight since even a single Voronoi tessellation of $n$ points in $\Rspace^d$ can have $\Omega (n^{\ceiling{d/2}})$ vertices, for example if the points are chosen on the moment curve, which is defined by $(t, t^2, \ldots, t^d)$, $t \in \Rspace$.

\subsection{Delone Sets Have Small Expected Overlays}
\label{sec:4.2}

We start by showing that dense sets without big holes have few spherical $k$-sets. To formalize this claim, we recall that $A \subseteq \Rspace^d$ is a \emph{Delone set} if there are constants $0 < r < R < \infty$ such that every open ball of radius $r$ contains at most one point, and every closed ball of radius $R$ contains at least one point of $A$.
For counting purposes, we say a spherical $k$-set, $B \subseteq A$, \emph{corresponds} to a point, $b \in A$, if there is a sphere that separates $B$ from $A \setminus B$ and $b$ is the point in $A$ closest to the center of this sphere.
\begin{lemma}[Bounded Correspondence]
  \label{lem:bounded_correspondence}
  Let $A \subseteq \Rspace^d$ be a Delone set.
  Then every point in $A$ corresponds to at most $O( k^{d+1} )$ spherical $k$-sets of $A$.
\end{lemma}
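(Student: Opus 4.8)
The plan is to fix a point $b \in A$ and show that every spherical $k$-set corresponding to $b$ is encoded by a cell in a single arrangement of $O(k)$ hyperplanes in $\Rspace^{d+1}$, whose number of cells is $O(k^{d+1})$. First I would localize the separating spheres. If $B$ is a spherical $k$-set corresponding to $b$, there is a sphere with center $c$ and radius $\rho$ with $B = A \cap \interior{\Ball{\rho}{c}}$ and $b$ the point of $A$ closest to $c$. The covering condition of the Delone property forces the nearest point of $A$ to $c$ to lie within distance $R$, so $c \in \Ball{R}{b}$. Moreover $\interior{\Ball{\rho}{c}}$ contains exactly the $k$ points of $B$, and the same covering condition gives a lower bound $\card{(A \cap \Ball{\rho}{c})} = \Omega((\rho/R)^d)$ via a standard grid/volume argument; hence $\rho \le Ck^{1/d}$ for a constant $C = C(R,d)$. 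Write $\rho_{\max} = Ck^{1/d}$.

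Next I would bound the number of points that can ever cross such a sphere. Any $a \in A$ with $\Edist{a}{b} > R + \rho_{\max}$ satisfies $\Edist{a}{c} > \rho_{\max} \ge \rho$ for every admissible center, so it always lies strictly outside the sphere and never affects $B$. Thus only the points of $A \cap \Ball{R+\rho_{\max}}{b}$ are relevant, and by the packing condition (disjoint $r$-balls centered at the points) their number is $m = O(((R+\rho_{\max})/r)^d) = O(k)$, using $\rho_{\max}^d = O(k)$ and that $R, r$ are constants.

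Now the linearization, which is the crux. Membership $a \in \interior{\Ball{\rho}{c}}$ reads $\Edist{c}{a}^2 < \rho^2$, that is $\|c\|^2 - 2\scalprod{c}{a} + \|a\|^2 - \rho^2 < 0$. Introducing the lifted coordinate $w = \|c\|^2 - \rho^2$, this becomes the linear inequality $w + \|a\|^2 - 2\scalprod{c}{a} < 0$ in $(c,w) \in \Rspace^{d+1}$, so each relevant point $a$ contributes a single hyperplane $H_a$. The set of points strictly inside the ball is constant on every open cell of the arrangement of the $m$ hyperplanes $\{H_a\}$, and after an infinitesimal adjustment of $\rho$ the lifted parameter of the witnessing sphere lies in an open cell on which this set equals $B$. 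Distinct spherical $k$-sets corresponding to $b$ therefore occupy distinct cells, and an arrangement of $m$ hyperplanes in $\Rspace^{d+1}$ has $O(m^{d+1}) = O(k^{d+1})$ cells, which yields the claimed bound.

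The main obstacle is the interplay of the two Delone constants in the counting: the covering radius $R$ is needed to bound $\rho$, and hence the reach $R + \rho_{\max}$, from above in terms of $k$, while the packing radius $r$ is needed to bound the number $m$ of relevant points once the reach is fixed. Securing $m = O(k)$, rather than a weaker polynomial in $k$, is exactly what makes the arrangement count yield $O(k^{d+1})$; the lifting to hyperplanes and the cell bound for hyperplane arrangements are then standard.
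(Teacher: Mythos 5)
Your proof is correct and follows essentially the same route as the paper's: use the Delone constants to bound the radius of any witnessing sphere by $O(k^{1/d})$, localize to $m = O(k)$ relevant points near $b$, and then invoke an $O(m^{d+1})$ bound on the number of subsets of an $m$-point set separable by a sphere. The only difference is that the paper cites this last combinatorial fact as known, whereas you supply its proof via the standard lifting of spheres to a hyperplane arrangement in $\Rspace^{d+1}$, which is a welcome but not substantively different addition.
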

\begin{proof}
  Let $x$ be a point in $\Rspace^d$ and suppose that it lies in the interior of a $d$-cell of the order-$k$ Voronoi tessellation of $A$.
  Assuming this cell is $\domain{B}{A}$, then $B$ is the unique spherical $k$-set that is separated from $A \setminus B$ by a sphere with center $x$.
  Letting $t$ be the radius of this sphere, we have $k r^d \leq (t+r)^d$ because the sphere with center $x$ and radius $t+r$ encloses $k$ disjoint open balls of radius $r$.
  Furthermore, $(t-R)^d \leq k R^d$ because the closed balls of radius $R$ centered at the points of $B$ cover the ball with center $x$ and radius $t-R$.
  Hence
  \begin{align}
    (\sqrt[d]{k} - 1) r  &\leq  t  \leq  (\sqrt[d]{k} + 1) R .
  \end{align}
  Let $b \in A$ be a point with $x \in \domain{b}{A}$.
  Since $A$ is Delone, $\domain{b}{A}$ is covered by the ball with center $b$ and radius $R$.
  It follows that the sphere with center $b$ and radius $(\sqrt[d]{k} + 2) R$ encloses all spherical $k$-sets that correspond to $b$.
  The number of points in $A$ enclosed by this sphere satisfies
  \begin{align}
    \ell  &\leq  \left[ \left( \sqrt[d]{k} + 2 \right) R + r \right]^d / r^d ,
  \end{align}
  which is $O(k)$ because $r$ and $R$ and therefore $R/r$ are positive constants.
  For a finite set in $\Rspace^d$, the number of ways it can be split into two by a sphere is less than the $(d+1)$-st power of its cardinality.
  Hence, there are at most $O(k^{d+1})$ spherical $k$-sets that correspond to $b$.
\end{proof}

Delone sets are necessarily infinite, so we let $\Omega$ be the closed ball with radius $\omega$ centered at the origin, and count a spherical $k$-set, $B \subseteq A$, only if there is a sphere that separates $B$ from $A \setminus B$ whose center is in $\Omega$.
\begin{theorem}[Overlay Size for Delone Set]
  \label{thm:overlay_size_for_Delone_set}
  Let $d$ and $s$ be constants, let $A \subseteq \Rspace^d$ be a Delone set, let $\sigma = \{0,1,\ldots, s\}$, let $\chi \colon A \to \sigma$ be a random coloring, and let $\Omega$ be the ball of points at distance at most $\omega > R$ from the origin.
  Writing $n = \card{(A \cap \Omega)}$ and $A_j = \chi^{-1} (j)$, the expected number of cells in $\Overlay{}{A_j}{j \in \sigma}$ that have at least one vertex in $\Omega$ is $O(n)$.
\end{theorem}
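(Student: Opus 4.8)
The plan is to localize the argument of Lemma~\ref{lem:spherical_k-set_and_overlay} to the ball $\Omega$, feeding it the correspondence bound of Lemma~\ref{lem:bounded_correspondence} together with a volume estimate for Delone sets. As in the proof of Lemma~\ref{lem:spherical_k-set_and_overlay}, I would first reduce the count of cells to a count of vertices: since every vertex of $\Overlay{}{A_j}{j \in \sigma}$ is incident to only a constant number of cells of dimension $1$ to $d$, the number of cells with at least one vertex in $\Omega$ is at most a constant times the number of overlay vertices lying in $\Omega$. It therefore suffices to bound the expected number of overlay vertices in $\Omega$, which I denote by $X_\Omega$.

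Next I would set up the localized spherical-$k$-set count. A vertex $x \in \Omega$ of the overlay carries a stack of spheres $S_0, S_1, \ldots, S_s$ with common center $x$, exactly as in the proof of Lemma~\ref{lem:spherical_k-set_and_overlay}; if $S_0$ is the largest of them and $k$ is the number of points of $A$ on or inside $S_0$, then $S_0$ separates a spherical $k$-set whose separating sphere is centered at $x \in \Omega$. The relevant quantity is thus the number of spherical $k$-sets of $A$ admitting a separating sphere with center in $\Omega$. Writing $\Omega'$ for the concentric ball of radius $\omega + R$, the point of $A$ closest to any center in $\Omega$ lies in $\Omega'$, because $A$ is Delone and hence has a point within distance $R$ of every center. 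So by Lemma~\ref{lem:bounded_correspondence} each such spherical $k$-set corresponds to one of the points of $A \cap \Omega'$, of which there are at most $O(k^{d+1})$ per point; the localized count is therefore $O(\card{(A \cap \Omega')} \cdot k^{d+1})$, uniformly in $k$.

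The crux is to replace $\card{(A \cap \Omega')}$ by $O(n)$. Here I would use the Delone packing and covering bounds: since every open ball of radius $r$ holds at most one point and every closed ball of radius $R$ holds at least one, the number of points of $A$ in a ball of radius $\rho \geq R$ is $\Theta(\rho^d)$ with constants depending only on $r$, $R$, and $d$. Because $\omega > R$ forces $\omega + R < 2\omega$, the radii of $\Omega$ and $\Omega'$ both lie in $[\omega, 2\omega]$, so $\card{(A \cap \Omega')} = O(\omega^d)$ while $n = \card{(A \cap \Omega)} = \Omega(\omega^d)$; hence $\card{(A \cap \Omega')} = O(n)$ and the localized count becomes $O(n \cdot k^{d+1})$. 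I expect this step --- bounding the count in the slightly larger ball $\Omega'$ by the count inside $\Omega$ --- to be the main point to get right, since it is exactly what converts the infinite Delone set into a finite, $n$-governed bound, and it is where the hypothesis $\omega > R$ is used.

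Finally I would run the probabilistic estimate of Lemma~\ref{lem:spherical_k-set_and_overlay} verbatim, with $O(n \cdot k^{d+1})$ in place of $O(k^c n^e)$. Fixing $d+s+1$ points and a coloring assigning every color, the probability that all but the $i_0 \leq d+1$ points of the spherical $k$-set on $S_0$ avoid color $0$ is $(s/(s+1))^{k-i_0}$, and at most $\binom{k}{d+s+1} (s+1)^{d+s+1}$ overlay vertices share the same largest sphere. Summing over $k \geq d+s+1$ gives
\begin{align}
  \Exp{X_\Omega} &< \sum\nolimits_{k} O(n \cdot k^{d+1}) \cdot \tbinom{k}{d+s+1} (s+1)^{d+s+1} \cdot \left( \tfrac{s}{s+1} \right)^{k-d-1} = O(n),
\end{align}
where the polynomial factor $k^{d+1} \binom{k}{d+s+1}$ is dominated by the exponentially decaying $(s/(s+1))^k$, so the series converges to a constant depending on $c$, $d$, $s$ but not on $n$ (nor on $\omega$). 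Note that, unlike in the finite case, $k$ ranges over all integers, since a large sphere centered in $\Omega$ can enclose arbitrarily many points; the exponential decay nevertheless keeps the sum finite. Combined with the vertex-to-cell reduction of the first paragraph, this yields the claimed $O(n)$ bound on the expected number of overlay cells with a vertex in $\Omega$.
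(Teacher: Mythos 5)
Your proposal is correct and follows essentially the same route as the paper's proof: reduce cells to vertices via constant incidence, count spherical $k$-sets with separating sphere centered in $\Omega$ by applying Lemma~\ref{lem:bounded_correspondence} to points in the enlarged ball of radius $\omega + R$, and then run the argument of Lemma~\ref{lem:spherical_k-set_and_overlay} with $c = d+1$ and $e = 1$. The only difference is that you spell out the packing/covering volume comparison showing $\card{(A \cap \Omega')} = O(n)$, which the paper states without proof.
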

\begin{proof}
  Let $0 < r < R < \infty$ be constants for which $A$ is Delone, and note that the number of points of $A$ at distance at most $\omega + R$ from the origin is $O(n)$.
  Any spherical $k$-set that has a separating sphere with center in $\Omega$ corresponds to a point in this slightly larger ball, so Lemma~\ref{lem:bounded_correspondence} implies that the number of such spherical $k$-sets is $O(k^{d+1} n)$.
  
  We count the vertices of the overlay using Lemma~\ref{lem:spherical_k-set_and_overlay} but restricted to crossings inside $\Omega$.
  We have $c = d+1$ and $e=1$, so we get an expected number of $O(n)$ vertices in $\Omega$.
  Assuming general position, every vertex belongs to only a constant number of cells, which implies the claimed bound on the number of cells with at least one vertex in $\Omega$.
\end{proof}

A vertex of the overlay corresponds to an $(s+d)$-cell in the chromatic Delaunay mosaic whose circumcenter project to the vertex.
Theorem~\ref{thm:overlay_size_for_Delone_set} thus counts the cells in the chromatic Delaunay mosaic that are faces of $(s+d)$-cells whose circumcenters project into $\Omega$.

\subsection{Dense Sets in the Plane Have Always Small Overlays}
\label{sec:4.3}

In $d=2$ dimensions, Theorem~\ref{thm:overlay_size_for_Delone_set} can be strengthened while weakening the assumptions on the points. The strong bound is presented in Theorem \ref{thm:overlay_size_for_dense_set}. The proof relies on two technical lemmas, which we prove first.

Let $Y \subseteq \Rspace^2$, $\varrho \colon Y \to \Rspace$ non-negative, and $\Union{Y}{\varrho}$ the union of closed disks with centers $x \in Y$ and radii $\varrho (x)$.
For example, $Y$ may be a line segment, a square, or the complement of a square, as illustrated in Figure~\ref{fig:unions}, and the radii may be any non-negative real numbers.
\begin{lemma}[Boundary of Union of Disks]
  \label{lem:boundary_of_union_of_disks}
  Let $S$ be a line segment of length $L$, $Q$ a square with sides of length $L$, and $\bar{Q}$ the closed complement of $Q$.
  For $Y \in \{S, Q, \bar{Q} \}$, let $\varrho_Y \colon Y \to \Rspace$ be non-negative and $R_Y = \max_{x \in Y} \varrho_Y (x)$.
  Then 
  \begin{align}
    \length{\partial \Union{S}{\varrho_S}} &< 4L+8R_S, 
      \label{eqn:lengthS} \\ 
    \length{\partial \Union{Q}{\varrho_Q}} &< 8L+8R_Q,
      \label{eqn:lengthQ} \\ 
    \length{\partial \Union{\bar{Q}}{\varrho_{\bar{Q}}}} &< 8L.
      \label{eqn:lengthbarQ}
  \end{align}
\end{lemma}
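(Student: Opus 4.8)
The plan is to treat the three shapes through a common reduction and to isolate a single one-dimensional estimate as the technical core. The key structural observation for $S$ and $Q$ is that $\Union{Y}{\varrho_Y}$ is \emph{vertically convex}: every vertical line meets it in a single (possibly empty) segment. For $Y=S$ placed on the $x$-axis this holds because each disk has its center on $S$, so it meets a vertical line in an interval symmetric about the $x$-axis; since all these intervals straddle the axis, their union is a single interval. For $Y=Q$ the same conclusion holds because $\Union{Q}{\varrho_Q}\supseteq Q$, so on a vertical line crossing $Q$ every contributing interval meets the full height of $Q$ and they again merge into one. Consequently $\Union{Y}{\varrho_Y}$ is the region between two graphs $y=H_-(x)$ and $y=H_+(x)$ over an interval $[\alpha,\beta]$, and (up to two vertical end-segments, which collapse to points generically) its boundary is the union of these two graphs. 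For $S$, symmetry gives $H_-=-H_+$, so I write $H=H_+$.

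First I would bound the horizontal extent: since every center has $x$-coordinate in $[0,L]$ and every radius is at most $R_Y$, we get $\beta-\alpha\le L+2R_Y$. A graph $y=H(x)$ over $[\alpha,\beta]$ has length at most $(\beta-\alpha)+\mathrm{TV}(H)$, where $\mathrm{TV}(H)$ is the total variation of $H$ (jumps counted as vertical pieces). For the segment this reduces the entire statement to the estimate
\[ \mathrm{TV}(H)\ \le\ L+2R_S, \]
because then $\length{\partial\Union{S}{\varrho_S}}$ equals twice the length of the graph of $H$, hence is at most $2\big((\beta-\alpha)+\mathrm{TV}(H)\big)\le 2\big((L+2R_S)+(L+2R_S)\big)=4L+8R_S$, with strict inequality since arc length is strictly below the $\ell^1$ bound wherever $H'\neq0$.

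The heart of the matter is this total-variation bound, and it is the step I expect to be the main obstacle. Equivalently, $\mathrm{TV}(H)=2\int_0^{R_S}N(c)\,dc$, where $N(c)$ counts the connected components of the horizontal slice $\Union{S}{\varrho_S}\cap\{y=c\}$, so I must show $\int_0^{R_S}N(c)\,dc\le \tfrac{L}{2}+R_S$. The difficulty is that $N(c)$ can be large when the radii oscillate. What saves the bound is a self-limiting trade-off: two disks of radius at most $R_S$ with nearby centers cannot create a deep valley of the upper envelope between them, so producing many separate bumps forces the centers to spread out horizontally. I would make this quantitative by charging the total ascent of $H$ against horizontal displacement, treating separately the gentle parts (slope $\le1$, whose ascent is bounded by the horizontal length they occupy) and the steep parts near the leftmost points of circles (whose total ascent is controlled by $R_S$ because there the envelope height is small). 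The extremal configuration of pairwise tangent semicircles of radius $R_S$ with centers filling $S$ attains $\mathrm{TV}(H)=L+2R_S$, confirming both the bound and the constants.

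For the square the vertical-slice reduction applies verbatim, and the boundary is again an upper graph plus a lower graph over $[\alpha,\beta]$; here the two graphs must additionally climb from the bottom to the top of $Q$ near the left and right ends, which supplies the extra $L$-terms, while the segment-type oscillation estimate applied along each of the four sides supplies the rest, yielding $8L+8R_Q$ (the end-zones $x\notin[0,L]$, where a slice can briefly split, are confined to width $R_Q$ and feed only the $R_Q$-term). The complement needs a separate argument: since $\Union{\bar Q}{\varrho_{\bar Q}}\supseteq\bar Q$, its complement is a hole contained in $Q$, and $\partial\Union{\bar Q}{\varrho_{\bar Q}}$ is the boundary of that hole. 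Its arcs are concave toward the hole, so there is no outward rounding and hence no $R_{\bar Q}$-term; I would bound the length by $8L$ by controlling the number of components of the slices of the hole inside $Q$, using the same amplitude-versus-separation trade-off in both coordinate directions. The new difficulty, which blocks a direct reuse of the vertically-convex reduction, is precisely that these slices need not be single intervals: a disk entering from the side can cover a middle portion of a slice and split it.
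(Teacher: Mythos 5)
Your reduction for the segment---view the region above the axis as the subgraph of $H$, bound the length by horizontal extent plus total variation, and note that horizontal extent is at most $L+2R_S$---is sound, and your target estimate $\mathrm{TV}(H)\le L+2R_S$ is exactly equivalent to the paper's key claim. But this is the crux, you flag it yourself as the main obstacle, and the mechanism you propose for it is false. You want to charge gentle parts (slope $\le 1$) to the width they occupy and claim the steep parts have total ascent ``controlled by $R_S$ because there the envelope height is small.'' Take $n$ disjoint disks of radius $R_S$ with centers spread along $S$: on each circle the ascending steep portion (where the slope exceeds $1$, i.e.\ $|x-c|>R_S/\sqrt{2}$) climbs from height $0$ to height $R_S/\sqrt 2$, so the total steep ascent is $nR_S/\sqrt 2$, unbounded in $n$. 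So steep ascent cannot be charged to $R_S$; it must also be charged to horizontal displacement. The paper does this per monotone run rather than by a gentle/steep dichotomy: the piece of the graph between a local minimum and the adjacent local maximum lies on or above the upper half-circle of the disk whose topmost point realizes that maximum, and on a circle centered on (or below) the base line the vertical drop over a horizontal displacement $w\le r$ is $r-\sqrt{r^2-w^2}\le w$; hence \emph{every} run, steep portion included, has height at most width. Summing widths over runs gives precisely your $\mathrm{TV}(H)\le L+2R_S$. So the missing lemma has a short geometric proof, but not along the lines you sketch.

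For $Q$ and $\bar Q$ the gaps are more structural. Your vertical-convexity claim for $Q$ fails off the ends $x\notin[0,L]$ (you concede this), and your repair---that the end zones ``feed only the $R_Q$-term''---is quantitatively wrong: the part of $\partial\Union{Q}{\varrho_Q}$ in $\{x>L\}$ is a graph over $y$ whose extent can be as large as $L+2R_Q$ (disks centered along the right side of $Q$), so each end zone can contribute roughly $2L+4R_Q$ of length, not $O(R_Q)$. Your vertical cut can in fact be salvaged (middle strip: upper and lower graphs of length $<2L$ each; each end zone: a graph over $y$ anchored at the cutting line, of length $<2(L+2R_Q)$; total $8L+8R_Q$), but as stated your accounting does not produce the bound; the paper instead cuts $\partial\Union{Q}{\varrho_Q}$ along the two lines supporting the top and bottom sides into four graphs. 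For $\bar Q$ you leave the difficulty you correctly identify---slices of the hole need not be intervals---unresolved, and ``controlling the number of components of the slices'' is not an argument. The paper's missing idea is to partition the \emph{centers}, not the hole: cut $\bar Q$ along the two diagonal lines of $Q$ into four wedges. All centers in, say, the bottom wedge lie below the line of the bottom side, so the union of their disks meets every vertical line through $Q$ in an interval anchored at that side; its boundary clipped to $Q$ is therefore a graph over the bottom side, of length $<2L$ by the run-by-run argument, and the four clipped graphs cover $\partial\Union{\bar Q}{\varrho_{\bar Q}}$, giving $8L$. Without this (or an equivalent) device, your bound \eqref{eqn:lengthbarQ} has no proof.
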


\begin{proof}
  We begin with the line segment, $S$, write $\affine{S}$ for the line that contains $S$, and assume that this line is horizontal.
  Note that $\partial \Union{S}{\varrho_S}$ is invariant under reflection across this line.
  Think of the boundary above the line as the graph of a function, with alternating minima and maxima as we go from left to right.
  We focus on the piece of the graph between a minimum and an adjacent maximum, and claim that this piece is at least as wide as it is high.
  To see this, note that the maximum is the center of a disk, and the piece lies on or above the upper half-circle in the boundary of this disk.
  If the entire piece lies in this half-circle, and the minimum is where the half-circle touches the line, then the width is equal to the height.
  In all other cases, the width exceeds the height.
  The length of the piece is less than its width plus its height, which is at most twice the width.
  The sum of widths is at most $L+2R_S$, which implies that the length of $\partial \Union{S}{\varrho_S}$ above $\affine{S}$ is less than $2L + 4R_S$.
  We get the same bound for the length below $\affine{S}$, which implies \eqref{eqn:lengthS}.
  \begin{figure}[htb]
    \centering \vspace{0.0in}
    \resizebox{!}{1.6in}{\input{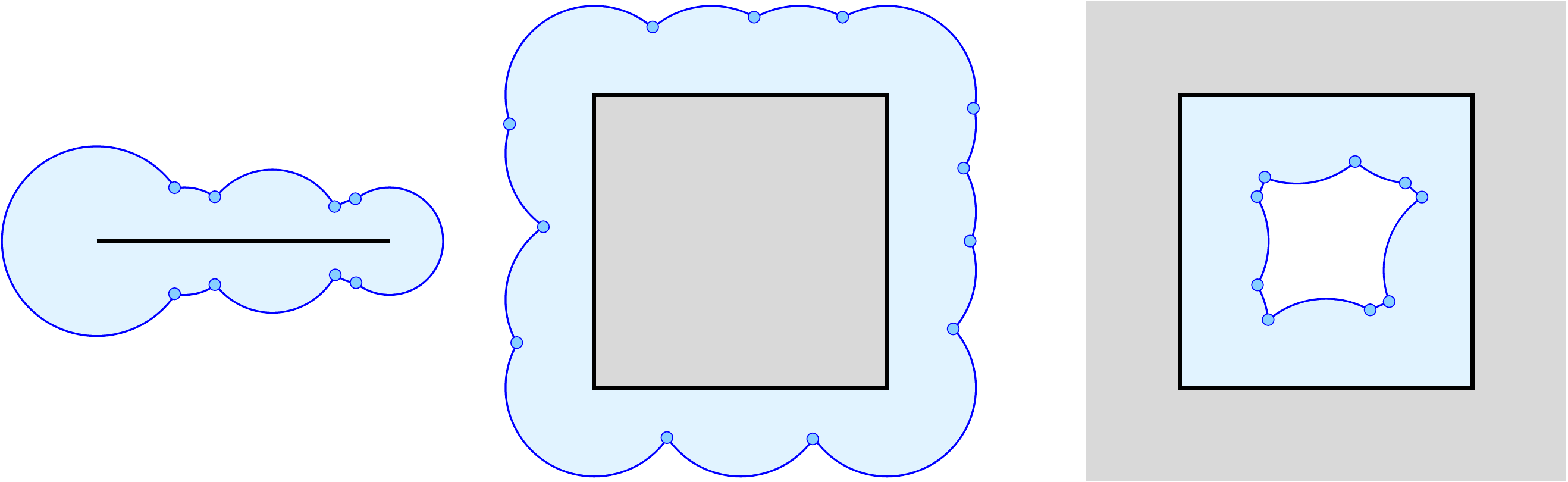_t}}
    \caption{Unions of closed disks whose centers lie on a line segment, on the \emph{left}, in a square, in the \emph{middle}, and in the complement of a square, on the \emph{right}.
    The blue points mark the shared endpoints of the circular arcs that make up the boundary of the union of disks.}
    \label{fig:unions}
  \end{figure}
  
  To get \eqref{eqn:lengthQ}, we decompose $\partial \Union{Q}{\varrho_Q}$ into four curves by cutting along the lines that support the upper and lower sides of the square.
  By the above argument, the length of the upper curve is less than $2L+4R_Q$, and similar for the lower curve.
  The left curve has (vertical) width $L$, so we get $2L$ as an upper bound for the length, and similar for the right curve.
  The sum of the four bounds is equal to the right-hand side of \eqref{eqn:lengthQ}.
  
  \medskip
  To get \eqref{eqn:lengthbarQ}, we decompose $\bar{Q}$ into four pieces by cutting along the two lines that contain the diagonals of the square.
  For each piece, we take the union of disks with centers in the piece, and finally clip the boundary to within $Q$.
  The four curves cover $\partial \Union{\bar{Q}}{\varrho_{\bar{Q}}}$, and by the above argument, each curve has length less than $2L$.
  This implies \eqref{eqn:lengthbarQ}.
\end{proof}

Suppose there is a finite set, $B \subseteq \Rspace^2$, such that $\varrho (x)$ is the distance to the closest point in $B$.
In this case, the number of points in $B$ that lie on the boundary of $\Union{Y}{\varrho}$ relates to the number of edges in $\Voronoi{}{B}$ that cross $Y$ or the boundary of $Y$.
As before, we distinguish between a line segment, a square, and the complement of the square.
\begin{lemma}[Counting Points and Crossings]
  \label{lem:counting_points_and_crossings}
  Let $S$ be a line segment, $Q$ a square, $\bar{Q}$ the closed complement of $Q$, and $B \subseteq \Rspace^2$ finite.
  For $Y \in \{ S, Q, \bar{Q} \}$, let $\varrho_Y \colon Y \to \Rspace$ be defined by $\varrho_Y (x) = \min_{a \in B} \Edist{x}{a}$, and write $B_Y = B \cap \partial \Union{Y}{\varrho_Y}$.
  Then the number of edges of $\Voronoi{}{B}$ that have a non-empty intersection with $S$, $\partial Q$, $\partial \bar{Q}$ is bounded from above by $\card{B_S}$, $\card{B_Q}$, $\card{B_{\bar{Q}}}$, respectively.
\end{lemma}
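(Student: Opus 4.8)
The plan is to read both sides of each inequality off the union of disks: the right-hand side counts the points of $B$ that survive on the boundary $\partial\Union{Y}{\varrho_Y}$, while the left-hand side counts how often $\Voronoi{}{B}$ is crossed by the relevant curve. Throughout, note that each disk generating $\Union{Y}{\varrho_Y}$ is centred at a point $x \in Y$ and has radius $\varrho_Y(x) = \Edist{x}{B}$, so its bounding circle passes through the nearest neighbour(s) of $x$ in $B$ and no point of $B$ lies strictly inside it.

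First I would identify $B_Y$. For $b \in B$ put $C_b = Y \cap \domain{b}{B}$, the set of $x \in Y$ for which $b$ is a nearest neighbour. Every disk centred in $C_b$ carries $b$ on its boundary circle, so $b \in \Union{Y}{\varrho_Y}$ precisely when $C_b \neq \emptyset$. Whether $b$ then lies on the boundary or in the interior of the union is decided locally: expanding $\Edist{x}{p} - \Edist{x}{b}$ to first order in $p$ near $b$ shows that a full neighbourhood of $b$ is covered exactly when the directions from the points of $C_b$ towards $b$ positively span $\Rspace^2$, that is, when $b \in \interior{\conv{C_b}}$. Hence $b \in B_Y$ iff $C_b \neq \emptyset$ and $b \notin \interior{\conv{C_b}}$, and at such a point the boundary of the union has a cusp. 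For the segment this is clean: $C_b$ lies on the line $\affine{S}$, so $\interior{\conv{C_b}} = \emptyset$ and $B_S$ is exactly the set of generators of the Voronoi domains that $S$ meets.

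For the segment case I would then exploit monotonicity. Since $S$ and every Voronoi domain are convex, $S \cap \domain{b}{B}$ is a (possibly empty) subsegment, so $S$ passes through the domains it meets in a single linear order $\domain{b_1}{B}, \ldots, \domain{b_m}{B}$ with $m = \card{B_S}$. Consecutive domains in this list are separated by exactly one edge of $\Voronoi{}{B}$, and distinct transitions lie on distinct edges because two segments meet in at most one point; thus the number of edges meeting $S$ is $m-1 < \card{B_S}$. The underlying injection sends each transition to the domain it enters.

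The closed-curve cases are the crux, and I expect them to be the main obstacle. Here monotonicity fails: because $\partial Q$ is not convex as a set, a single convex domain can meet it in two arcs, so the cyclic list of domains encountered along $\partial Q$ may repeat and the naive transition-to-entered-domain map need no longer be injective. My plan is instead to trace the boundary $\partial\Union{Y}{\varrho_Y}$ itself. It decomposes into circular arcs, one maximal arc per run of constant nearest neighbour along the curve; the runs are separated precisely where the curve crosses an edge of $\Voronoi{}{B}$, and each exposed generator contributes a cusp inside the arc belonging to its domain. Matching arcs, cusps, and crossings around each closed boundary component should then yield that the number of crossings, hence the number of edges meeting the curve, is at most the number of cusps $\card{B_Y}$. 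The complement case $\bar{Q}$ is identical, with the disks growing to the other side of the same curve $\partial Q = \partial\bar{Q}$, which is why the one crossing count is bounded by both $\card{B_Q}$ and $\card{B_{\bar Q}}$. The delicate points I would have to control are the exact bookkeeping when a domain is revisited---so that every revisit either supplies its own cusp or is paired with a crossing on a separate boundary component---and the degenerate configurations where $C_b$ is lower-dimensional. Should the arc bookkeeping become unwieldy, an alternative is to split $\partial Q$ into its four sides, apply the segment bound to each, and absorb the overcount at the four corners, or to run an Euler-characteristic count on the subdivision that $\Voronoi{}{B}$ induces along the curve.
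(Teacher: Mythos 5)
Your treatment of the segment is correct and takes a genuinely different route from the paper's. The paper proves the segment case by a reflection argument: $\partial\Union{S}{\varrho_S}$ is symmetric across $\affine{S}$, its vertices come in mirror pairs of which exactly one is a point of $B$, and after reflecting points of $B$ to one side, crossings are matched to adjacent pairs of vertices above the line. You instead identify $B_S$ outright as the set of generators whose domains meet $S$ (your local cusp criterion, $b \in \partial\Union{Y}{\varrho_Y}$ iff $C_b \neq \emptyset$ and $b \notin \interior{\conv{C_b}}$, which the paper never makes explicit, settles this because $C_b$ lies on a line), and then use convexity of $S$ and of the Voronoi domains to count $\card{B_S}-1$ transitions, each lying on its own edge. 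This is cleaner, avoids the reflection device, and is even sharper by one.

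For $Q$ and $\bar{Q}$ --- which you rightly call the crux --- the proposal stops at a plan, and this is a genuine gap. The step ``matching arcs, cusps, and crossings \dots should then yield that the number of crossings is at most the number of cusps'' is not an argument; it is precisely the statement to be proved, and the non-injectivity you diagnose for $\partial Q$ reappears, unresolved, as the bookkeeping you defer. Worse, your own cusp criterion shows that such a matching cannot exist in general: a point $b \in B$ lying in the interior of $Q$ satisfies $b \in \interior{\conv{(\domain{b}{B} \cap Q)}}$, so it contributes no cusp, yet its Voronoi edges can still cross $\partial Q$. Concretely, if $B$ consists of two points near the center of $Q$, then $B_Q = \emptyset$ while the bisecting edge meets $\partial Q$. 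So any completion must invoke the hypothesis---implicit in the statement, and guaranteed wherever Theorem~\ref{thm:overlay_size_for_dense_set} applies the lemma---that $Q$ (respectively $\bar{Q}$) contains no point of $B$; only then is every generator whose domain meets $Q$ automatically a cusp (since $b \notin Q \supseteq \conv{(\domain{b}{B} \cap Q)}$), and even then the matching still has to be constructed. The paper's mechanism, which your sketch lacks, is to cut $\partial\Union{Q}{\varrho_Q}$ into four graph-like curves along the two lines supporting the upper and lower sides of the square and to rerun the segment-style reflection argument on each, the four junctions accounting for the discrepancy; your fallback of applying the segment bound to the four sides of $\partial Q$ does not obviously recover the bound $\card{B_Q}$ either, because one domain can meet several sides and would then be counted several times.
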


\begin{proof}
  We begin with the line segment, $S$, and as before we assume that $\affine{S}$ is horizontal.
  By construction, $\Union{S}{\varrho_S}$ is contractible and symmetric with respect to $\affine{S}$.
  It follows that $\partial \Union{S}{\varrho_S}$ is a closed curve with even number of circular arcs meeting at the same number of vertices.
  In the generic case, every point of $B$ on $\partial \Union{S}{\varrho_S}$ is a vertex of the curve, and a vertex is a point in $B$ iff the reflected vertex on the other side of $\affine{S}$ is not a point in $B$.
  If we replace a point of $B$ that is a vertex of the curve by its reflected copy, then this changes the Voronoi tessellation but not the way in which $S$ crosses its edges.
  We can therefore assume that all vertices above $\affine{S}$ are points in $B$, while all vertices below $\affine{S}$ are not.
  In this case, we have a crossing for each pair of adjacent vertices above $\affine{S}$.
  The number of crossings is thus less than the number of points of $B$ on the boundary of the union of disks, which implies the first claim.
  
  Consider next the case of a square, $Q$.
  As before, we decompose $\partial \Union{Q}{\varrho_Q}$ into four curves, one above the line of the upper side, the second below the line of the lower side, and the remaining left and right curves.
  For each curve, we reflect points of $B$ so that all vertices shared between adjacent circular arcs are points in $B$.
  In this case, we have four more points of $B$ on $\partial \Union{Q}{\varrho_Q}$ than edges of $\Voronoi{}{B}$ that cross the sides of $Q$.
  The argument for the complement of $Q$ is similar and omitted.
\end{proof}

Define the \emph{density} of a finite set as the maximum distance between two points divided by the minimum distance between two points.
A set in $\Rspace^2$ is \emph{dense} if its density is not much bigger than $\sqrt{n}$.
We show that for a dense set in the plane, the overlay of mono-chromatic Voronoi tessellations has small size for \emph{every} coloring.

\begin{theorem}[Overlay size for Dense set]\label{thm:overlay_size_for_dense_set}
  Let $A \subseteq \Rspace^2$ be finite with density $m$, write $n = \card{A}$, let $\sigma = \{0,1,\ldots,s\}$, and write $A_j = \chi^{-1} (j)$, in which $\chi \colon A \to \sigma$ is any coloring.
  Then the number of regions in $\Overlay{}{A_j}{j \in \sigma}$ is $O(s^2 m^2)$.
\end{theorem}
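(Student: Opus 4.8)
The plan is to reduce the count of regions to a count of vertices, dispose of the easy mono-chromatic part by a packing estimate, and then concentrate the real effort on the bichromatic crossings, handled one color pair at a time, where the two technical lemmas do the work. First I would normalize so that the closest pair of points of $A$ is at distance $1$; then the farthest pair is at distance $m$, so $A$ lies in a disk of radius $m$, and disjoint radius-$\tfrac12$ disks around the points give $\card{A} = O(m^2)$. The overlay $\Overlay{}{A_j}{j \in \sigma}$ is a planar arrangement in which every vertex has bounded degree (Voronoi vertices have degree $3$, edge crossings degree $4$), so by Euler's relation the number of regions is within a constant factor of the number of vertices. It therefore suffices to bound the vertices, which are of two kinds: vertices of the individual $\Voronoi{}{A_j}$, and crossings of an edge of $\Voronoi{}{A_i}$ with an edge of $\Voronoi{}{A_j}$ for $i \neq j$. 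Each planar $\Voronoi{}{A_j}$ has $O(\card{A_j})$ vertices, so the first kind totals $\sum_j O(\card{A_j}) = O(\card{A}) = O(m^2)$, comfortably inside the claimed bound.

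The crossings are the heart of the matter, and I would group them by the unordered color pair $\{i,j\}$. Since there are $\binom{s+1}{2} = O(s^2)$ such pairs, it suffices to show that each pair contributes $O(m^2)$ crossings; summing then yields $O(s^2 m^2)$, which with the first kind gives the theorem. For a fixed pair I would count, for each edge $f$ of $\Voronoi{}{A_j}$, how many edges of $\Voronoi{}{A_i}$ meet $f$; this is the number of distinct cells of $\Voronoi{}{A_i}$ that $f$ traverses, minus one. Lemma~\ref{lem:counting_points_and_crossings} bounds this by the number of points of $A_i$ on the boundary of the union of disks centered along $f$ with radii equal to the distance to $A_i$, and Lemma~\ref{lem:boundary_of_union_of_disks} bounds the length of that boundary. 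The crucial conversion is that, since any two points of $A \supseteq A_i$ are at distance at least $1$, consecutive points of $A_i$ along any rectifiable curve are at least $1$ apart, so a boundary curve of length $\ell$ carries at most $\ell + O(1)$ points of $A_i$; this turns each length estimate into a crossing count.

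The main obstacle is to assemble these local counts into a clean $O(m^2)$ bound per pair, and two difficulties arise. First, a single region can carry as many as $\Theta(m^2)$ crossings (think of one color densely filling a large circle, so that its bisectors converge near the center, while the other color is structured there too), so the bound cannot be obtained cell-by-cell with a constant; it must be global. Second, the segment estimate of Lemma~\ref{lem:boundary_of_union_of_disks} carries a term $8R$ proportional to the largest distance from the curve to $A_i$, which is unbounded when $A_i$ has a large empty region---precisely what an arbitrary coloring permits. This is exactly what the square and, above all, the complement-of-square cases of the two lemmas are meant to control: the complement estimate bounds the relevant boundary length by $8L$ with no dependence on $R$, so I can charge the crossings contributed by edges running through regions far from $A_i$ against the side length of an enclosing square rather than against the (possibly huge) distance to the color class.

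Concretely, I would enclose $A$ in its bounding square of side $m$ and organize the per-edge counts so that the three estimates combine: long edges and edges traversing regions far from one color are charged through the square and complement-of-square bounds, while local crossings are charged through the segment bound, with every crossing converted to points of a color class via the uniform separation. The delicate step---the one I expect to be genuinely hard---is verifying that this bookkeeping sums to $O(m^2)$ per pair rather than a larger power of $m$, i.e.\ that the global charge absorbs both the high-crossing regions and the far-field $R$-terms simultaneously.
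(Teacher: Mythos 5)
Your setup is sound and you have correctly identified both the right ingredients (the two technical lemmas, the reduction to crossings counted one color pair at a time, the separation-to-count conversion, and the complement-of-square estimate for the far field) and the two genuine difficulties (no cell-by-cell constant bound, and the unbounded $8R$ term when a color class has a large empty region). But the proposal stops exactly where the proof has to start: you never specify the bookkeeping that makes the per-pair count sum to $O(m^2)$, and you acknowledge as much. This is a genuine gap, not a routine verification. With only the bounding square of side $m$ at your disposal, the natural per-edge summation fails: $\Voronoi{}{A_0}$ can have $\Theta(m^2)$ edges inside the square, each of length up to $\Theta(m)$ and each with $R$-term up to $\Theta(m)$, so charging every edge through the segment lemma gives $O(m^3)$, not $O(m^2)$. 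The single square and its complement control only the outside; they provide no mechanism for localizing the interior counts.

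The missing idea in the paper is an \emph{adaptive quadtree subdivision} tailored to the fixed color pair: starting from the bounding square of side $m$, recursively split any square containing points of both $A_0$ and $A_1$ into four equal squares, stopping when every square misses at least one of the two colors. This single device resolves both of your difficulties at once. Because the \emph{parent} of a terminal square $Q$ (side $L$) contains a point of each color, every distance-to-color-class radius relevant inside $Q$ is at most $2\sqrt{2}L$, so the troublesome $8R$ terms become $O(L)$; moreover, since $Q$ misses one color, say $A_0$, the edges of $\Voronoi{}{A_0}$ inside $Q$ form no cycles, so their number is at most twice the number crossing $\partial Q$, which Lemmas~\ref{lem:boundary_of_union_of_disks} and~\ref{lem:counting_points_and_crossings} bound by $O(L)$. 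Applying the segment lemma to each of these $O(L)$ edge pieces (each of length at most $\sqrt{2}L$, with $R_S = O(L)$) gives $O(L)$ crossings per piece, hence $O(L^2)$ crossings per square---a constant times its \emph{area}. Since subdivision preserves total area, the sum over all terminal squares is $O(m^2)$, and the complement case adds only $O(m^2)$ more. Without this subdivision (or an equivalent localization scheme in which $R$ is always proportional to the scale being charged), the three lemmas cannot be combined to beat $O(m^3)$, so the hard step you deferred is precisely the content of the theorem.
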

\begin{proof}
  We will show that the number of crossings between the edges of any two mono-chromatic Voronoi tessellations is $O(m^2)$.
  There are $\binom{s+1}{2} \leq s^2$ pairs of colors and therefore $O(s^2 m^2)$ crossings in total.
  The number of regions in the overlay is the number of regions in the $s+1$ mono-chromatic Voronoi tessellations, which is $n = \card{A}$, plus twice the number of crossings.
  Since $n = O(m^2)$, this implies that the number of regions is $O(s^2 m^2)$.
  
  \medskip
  For the remainder of this proof, we fix two colors, $0$ and $1$, we assume that the minimum distance between points in $A$ is $1$, so the maximum distance is $m$.
  Observe that there is a square with sides of length $m$ that contains $A$, and therefore $A_0$ and $A_1$.
  If there is at least one point each of $A_0$ and $A_1$ in the square, then we subdivide it into four equally large squares.
  We recursively subdivide each of these squares independently until we arrive at squares that contain points of at most one of these two colors.
  By choosing the initial square judiciously, we may assume that no point of $A$ lies on the boundary of any of these squares.
  Since subdivision does not alter the total area, the sum of areas of these squares is $m^2$.
  
  Let $Q$ be a square in this subdivision, write $L$ for the length of its sides, and assume that it contains no points of $A_0$.
  Let $Q'$ be the parent square of four times the area, which, by construction, contains at least one point of $A_0$ and at least one point of $A_1$.
  Let $\varrho_Q \colon Q \to \Rspace$ be defined by $\varrho_Q (x) = \min_{a \in A_0} \Edist{x}{a}$.
  Since $Q'$ contains at least one point of $A_0$, we have $R_Q = \max_{x \in Q} \varrho_Q (x) < 2 \sqrt{2} L$.
  Recall that $\Union{Q}{\varrho_Q}$ is the union of closed disks with centers $x$ and radii $\varrho_Q (x)$.
  By Lemma~\ref{lem:boundary_of_union_of_disks}, the length of the boundary satisfies
  \begin{align}
    \length{\partial \Union{Q}{\varrho_Q}}  &<  8L + 8R_Q  < (8 + 16 \sqrt{2}) L  <  31 L .
  \end{align}
  Since any two points of $A_0$ are at least a distance $1$ apart, this implies that there are fewer than $31 L$ points of $A_0$ on the boundary of the union of disks.
  By Lemma~\ref{lem:counting_points_and_crossings}, fewer than $31 L$ edges in $\Voronoi{}{A_0}$ cross the sides of $Q$.
  Since no point of $A_0$ is inside $Q$, the edges of $\Voronoi{}{A_0}$ inside $Q$ do not form cycles, so more than half of them cross the sides of $Q$.
  It follows that fewer than $62 L$ edges of $\Voronoi{}{A_0}$ have a non-empty intersection with $Q$.
  Let $S$ be the intersection of one such edge with $Q$, which is either the entire edge or a connected piece of it.
  The length of $S$ is at most $\sqrt{2} L$.
  Let $\varrho_S \colon S \to \Rspace$ be defined by $\varrho_S (x) = \min_{a \in A_1} \Edist{x}{a}$.
  The maximum such distance satisfies $R_S = \max_{x \in S} \varrho_S (x) < 2 \sqrt{2} L$.
  By~Lemma~\ref{lem:boundary_of_union_of_disks}, the length of the boundary satisfies
  \begin{align}
    \length{\partial \Union{S}{\varrho_S}}  &<  4L+8R_S  <  (4 + 16 \sqrt{2}) L  <  27 L .
  \end{align}
  Since any two points of $A_1$ are at least a distance $1$ apart, this implies that fewer than $27 L$ points of $A_1$ lie on the boundary of the union of disks.
  By Lemma~\ref{lem:counting_points_and_crossings}, fewer than $27 L$ edges of $\Voronoi{}{A_1}$ cross $S$.
  Multiplying with the number of edges in $\Voronoi{}{A_0}$ inside $Q$, we get fewer than $62 L \cdot 27 L = 1674 L^2$ crossings.
  This is only a constant times the area of $Q$.
  Taking the sum over all squares in the subdivision, we thus get fewer than $1674 m^2$ crossings between edges of $\Voronoi{}{A_0}$ and $\Voronoi{}{A_1}$ inside the initial square.
  
  \medskip
  It remains to bound the number of crossings outside the initial square.
  Let $\bar{Q}$ be the complement of the initial square, which we recall has sides of length $m$.
  Let $\varrho_j \colon \bar{Q} \to \Rspace$ be defined by $\varrho_j (x) = \min_{a \in A_j} \Edist{x}{a}$, for $j = 0, 1$.
  By Lemma~\ref{lem:boundary_of_union_of_disks}, we have
  \begin{align}
    \length{\partial \Union{\bar{Q}}{\varrho_j}}  &<  8m .
  \end{align}
  By Lemma~\ref{lem:counting_points_and_crossings}, fewer than $8m$ edges of $\Voronoi{}{A_j}$ cross the sides of $\bar{Q}$.
  Since there are no points of $A_j$ in $\bar{Q}$,
  this implies that $\Voronoi{}{A_j}$ has fewer than $16 m$ edges with non-empty intersection with $\bar{Q}$.
  Even if every such edge of $\Voronoi{}{A_0}$ crossed every such edge of $\Voronoi{}{A_1}$, we still have fewer than $16 m \cdot 16 m = 256 m^2$ crossings outside $Q$.
  Adding the numbers of crossings inside and outside $Q$, we get fewer than $1930 m^2$ crossings altogether.
\end{proof}

For dense sets, $A$, we have $m = \Theta (\sqrt{n})$, so the size of the overlay of $s+1$ mono-chromatic Voronoi tessellations is $O(s^2 n)$, which is linear in $n$ if we assume that $s$ is a constant.

\section{Poisson Point Processes}
\label{sec:5}

We use a stationary Poisson point process in $\Rspace^d$ with a random coloring as the model for random data.
Recall that the \emph{intensity} of the process is the expected number of points per unit volume in $\Rspace^d$.
To make a linguistic difference, we call the expected number of vertices of the Voronoi tessellation per unit volume the \emph{density} of the vertices, and similar for the cells of dimension one or higher.
After deriving relevant densities from prior work, we present experimental findings, which confirm some of the derived densities but also go beyond them.
We note that a stationary Poisson point process on a compact domain is a sampling according to the uniform distribution.
So modulo boundary effects, the density of the process translates to a linear bound for the uniform distribution.

\subsection{Densities, Analytically}
\label{sec:5.1}

We focus on the vertices of the overlay of Voronoi tessellations.
The local neighborhood of every such vertex has constant size, which implies that the density of $p$-cells in the overlay is at most a constant times the density of the vertices.
Besides the vertices of the mono-chromatic Voronoi tessellations, there are also \emph{crossings}, which are the $0$-dimensional common intersections of two or more cells in differently colored mono-chromatic Voronoi tessellations.
Assuming general position, the sum of the co-dimensions of these cells is necessarily equal to $d$.
For every $0 \leq p \leq d$ and every $k \geq 1$, the density of the $p$-cells in the order-$k$ Voronoi tessellation of a stationary Poisson point process, $A \subseteq \Rspace^d$, is a constant times $k^{d-1}$, and an explicit formula is given in \cite[Theorem 1.2]{EdNi19B}.
Given a random coloring, the proof of  Lemma~\ref{lem:spherical_k-set_and_overlay} thus implies that the density of crossings between the mono-chromatic Voronoi tessellations is also bounded away from infinity.
For the cases in which $d=2$ or $s+1 = 2$, we can use prior work on weighted and unweighted Delaunay mosaics \cite{EdNi19A,EdNi19B} to determine these densities precisely.
In $\Rspace^2$, crossings happen between two edges, one each of two different Voronoi tessellations.
\begin{theorem}[Density of Crossings in Plane]
  \label{thm:density_of_crossings_in_plane}
  Let $A \subseteq \Rspace^2$ be a stationary Poisson point process with intensity $\intensity > 0$, $s$ a constant, and $\chi \colon A \to \{0,1,\ldots,s\}$ a random coloring.
  Then the density of crossings between the mono-chromatic Voronoi tessellations is $\Crossings = \frac{4s}{\pi} \cdot \intensity$.
\end{theorem}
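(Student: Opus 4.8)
The plan is to reduce the problem to counting transversal intersections between the edges of two \emph{independent} planar Voronoi tessellations and then to invoke an integral-geometric crossing formula. The first step is to exploit the coloring (thinning) theorem for Poisson point processes: since $\chi$ colors each point of $A$ independently and uniformly among $s+1$ colors, the subsets $A_0, A_1, \ldots, A_s$ are mutually independent stationary Poisson point processes, each of intensity $\lambda = \intensity/(s+1)$. Every crossing is, by definition, a $0$-dimensional common intersection of cells from two distinctly colored tessellations, and in $\Rspace^2$ the codimensions must sum to $2$; generically this forces the crossing to be the transversal meeting of one edge of $\Voronoi{}{A_i}$ and one edge of $\Voronoi{}{A_j}$ with $i \neq j$. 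I would first argue that coincidences of any higher type (a vertex of one tessellation lying on the other, or triple points) occur with probability zero, so that $\Crossings$ is the sum over the $\binom{s+1}{2}$ color pairs of the intensity of edge--edge intersections of the corresponding pair of independent tessellations.

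For a single such pair I would treat the $1$-skeleta of $\Voronoi{}{A_i}$ and $\Voronoi{}{A_j}$ as two independent stationary and isotropic curve (fibre) processes in the plane. Isotropy follows from the rotation invariance of the Poisson law, and independence from the coloring theorem. The key tool is the integral-geometric intersection formula (Poincar\'e's formula for fibre processes): two independent stationary isotropic curve processes with specific lengths (mean length per unit area) $L_i$ and $L_j$ produce intersection points with intensity $\frac{2}{\pi} L_i L_j$, where the factor $\frac{2}{\pi}$ is the mean of $\lvert \sin \rvert$ of the angle between two independent uniformly random directions.

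It then remains to supply the specific edge length of a planar Poisson--Voronoi tessellation of intensity $\lambda$, which is $L = 2\sqrt{\lambda}$; this is the classical value and can equally be read off from the cell-density computations of \cite{EdNi19A,EdNi19B}. Substituting gives an intersection intensity of $\frac{2}{\pi}\bigl(2\sqrt{\lambda}\bigr)^2 = \frac{8\lambda}{\pi}$ per color pair, and summing over all pairs yields
\begin{align*}
  \Crossings = \binom{s+1}{2} \cdot \frac{8}{\pi} \cdot \frac{\intensity}{s+1} = \frac{s(s+1)}{2} \cdot \frac{8\,\intensity}{\pi (s+1)} = \frac{4s}{\pi} \cdot \intensity ,
\end{align*}
as claimed.

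I expect the main obstacle to be the rigorous justification of the intersection formula in this setting, rather than the final arithmetic. One must confirm that the Voronoi edge processes are rectifiable fibre processes to which Poincar\'e's formula applies, verify stationarity, isotropy, and (via thinning) independence of the two processes, and rule out the measure-zero degeneracies mentioned above so that every intersection is counted exactly once. The second delicate point is pinning down the specific edge length $2\sqrt{\lambda}$ precisely; I would either cite it directly or derive it from the mean typical-cell perimeter $4\lambda^{-1/2}$ together with the fact that each edge bounds two cells, consistent with the density results of \cite{EdNi19A,EdNi19B}.
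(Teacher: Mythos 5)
Your proposal is correct and takes essentially the same route as the paper: thinning to obtain independent Poisson processes of intensity $\intensity/(s+1)$, the edge-length density $2\sqrt{\intensity/(s+1)}$ of each mono-chromatic Voronoi $1$-skeleton, an integral-geometric crossing rate carrying the factor $\frac{2}{\pi}$, and a sum of $\frac{8\intensity}{\pi(s+1)}$ over the $\binom{s+1}{2}$ color pairs. The only cosmetic difference is that you invoke the classical Poincar\'e intersection formula for isotropic fibre processes directly, whereas the paper folds the same $\frac{2}{\pi}$ factor into the cited line-versus-skeleton crossing density $\frac{4}{\pi}\sqrt{\intensity/(s+1)}$ and then multiplies by the length density of the other skeleton.
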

\begin{proof}
  Since the coloring is random, each $A_j = \chi^{-1} (j)$ is a stationary Poisson point process with intensity $\frac{\intensity}{s+1}$; see e.g.\ \cite[Chapter 11]{Pis14}.
  By \cite[Theorem 1.1]{EdNi19B}, this implies that the density of the length of the $1$-skeleton of $\Voronoi{}{A_j}$ is $2 \sqrt{\intensity/(s+1)}$, and as proved in \cite{EdNi19A}, the density of crossings between a line and the $1$-skeleton is $\frac{4}{\pi} \sqrt{\intensity/(s+1)}$.
  This is true for every $0 \leq j \leq s$, so we get the density of crossings between the two $1$-skeletons by multiplication, which gives $\frac{8}{\pi} \frac{\intensity}{s+1}$.
  There are $\binom{s+1}{2} = \frac{1}{2} s (s+1)$ pairs of colors, which implies $\Crossings = \frac{4s}{\pi} \cdot \intensity$.
\end{proof}

The proof of Theorem~\ref{thm:density_of_crossings_in_plane} can be modified to show that the density of crossings is maximized by balanced colorings.
Suppose for example that $s+1 = 2$ and the random coloring is biased, with probabilities $\lambda$ and $1-\lambda$ for colors $0$ and $1$, respectively.
Then the intensities of $A_0$ and $A_1$ are $\lambda \varrho$ and $(1-\lambda) \varrho$, so the density of the crossings between their Voronoi tessellations is $\frac{8}{\pi} \sqrt{\lambda (1-\lambda)}$, which is a maximum for $\lambda = \frac{1}{2}$.

\medskip
We extend Theorem~\ref{thm:density_of_crossings_in_plane} to two colors in $d$ dimensions, while leaving the case of three or more colors in three or more dimensions as an open question.
We prepare the extension by introducing three families of constants, in which we write $\omega_d$ for the $(d-1)$-dimensional volume of the unit sphere in $\Rspace^d$ and $\Gamma$ for the gamma function, which generalizes the factorial to real arguments:
\begin{align}
  \VV{p}{d}  &=  \frac{2^{d-p+1} \pi^{\frac{d-p}{2}}}{d (d-p+1)!} \cdot
    \frac{\Gamma(\frac{d^2-pd+p+1}{2})}{\Gamma(\frac{d^2-pd+p}{2})} \cdot
    \frac{\Gamma(\frac{d+2}{2})^{d-p+\frac{p}{d}}}{\Gamma(\frac{d+1}{2})^{d-p}} \cdot
    \frac{\Gamma(d-p+\frac{p}{d})}{\Gamma(\frac{p+1}{2})} , 
    \label{eqn:VV} \\
  \DD{p}{d}  &=  \frac{\omega_1 \omega_{d+1}}{\omega_{p+1} \omega_{d-p+1}}
    \frac{2^{p+1} \pi^{\frac{p}{2}}}{d (p+1)!} \cdot
    \frac{\Gamma(\frac{pd+d-p+1}{2})}{\Gamma(\frac{pd+d-p}{2})} \cdot
    \frac{\Gamma(\frac{d+2}{2})^{p+1-\frac{p}{d}}}{\Gamma(\frac{d+1}{2})^{p}} \cdot
    \frac{\Gamma(p+1-\frac{p}{d})}{\Gamma(\frac{d-p+1}{2})} , 
    \label{eqn:DD} \\
  \XX{d}     &=  \tfrac{1}{2} ( \VV{1}{d} \DD{1}{d} + \VV{2}{d} \DD{2}{d} + \ldots + \VV{d-1}{d} \DD{d-1}{d} ),
    \label{eqn:XX}
\end{align}
for $d \geq 2$ and $1 \leq p \leq d-1$.
By comparing the factors of $\VV{p}{d}$ and $\DD{p}{d}$, it is not difficult to see that $\VV{p}{d} \DD{p}{d} = \VV{d-p}{d} \DD{d-p}{d}$.
Indeed, the two sides of this equation are just different ways to count the same thing, as we will see shortly.
Table~\ref{tbl:crossingconstants} gives approximations of the constants for small values of $d$ and $p$.
\begin{table}[hbt]
    \centering
    \begin{tabular}{r||rrrrr|r}
      & \multicolumn{5}{c|}{$\VV{p}{d} \cdot \DD{p}{d}$} & \\
      & \multicolumn{1}{c}{$p=1$} & \multicolumn{1}{c}{$p=2$} & \multicolumn{1}{c}{$p=3$} & \multicolumn{1}{c}{$p=4$} & \multicolumn{1}{c|}{$p=5$} & \multicolumn{1}{c}{$\XX{d}$} \\
      \hline \hline
      $d=2$ & $2.00 \cdot 1.27$ &                   &                   &                   &                    & $1.27$ \\
      $d=3$ & $5.83 \cdot 1.46$ & $2.91 \cdot 2.92$ &                   &                   &                    & $8.49$ \\
      $d=4$ & $23.96 \cdot 1.58$ & $10.97 \cdot 3.66$ & $3.72 \cdot 10.17$ &                   &                    & $57.88$ \\
      $d=5$ & $126.74 \cdot 1.67$ & $53.22 \cdot 4.25$ & $17.00 \cdot 13.30$ & $4.45 \cdot 47.53$ &                    & $437.78$ \\
      $d=6$ & $809.75 \cdot 1.74$ & $316.00 \cdot 4.74$ & $94.90 \cdot 16.11$ & $23.68 \cdot 63.20$ & $5.12 \cdot 274.93$  & $3668.63$
    \end{tabular}
    \caption{Approximations of the constants in \eqref{eqn:VV}, \eqref{eqn:DD}, \eqref{eqn:XX} for small values of $d$ and $p$.}
    \label{tbl:crossingconstants}
\end{table}
The meaning of the constants and the corresponding sources will be revealed in the proof of the next theorem.
For two Voronoi tessellations in $\Rspace^d$, crossings happen between the $p$-cells of one and the $(d-p)$-cells of the other tessellation.
\begin{theorem}[Density of Crossings for Two Colors]
  \label{thm:density_of_crossings_for_two_colors}
  Let $A \subseteq \Rspace^d$ be a stationary Poisson point process with intensity $\intensity > 0$, and let $\chi \colon A \to \{0,1\}$ be a random bi-coloring.
  Then the density of the crossings between the mono-chromatic Voronoi tessellations is $\Crossings (\chi) = \XX{d} \cdot \intensity$.
\end{theorem}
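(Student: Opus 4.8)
The plan is to generalize the proof of Theorem~\ref{thm:density_of_crossings_in_plane} by organizing the crossings according to the dimensions of the two participating cells. First I would record that, because $\chi$ is a random bi-coloring of a Poisson point process, the two color classes $A_0 = \chi^{-1}(0)$ and $A_1 = \chi^{-1}(1)$ are \emph{independent} stationary Poisson point processes, each of intensity $\intensity/2$; this is again the independent-thinning property used for Theorem~\ref{thm:density_of_crossings_in_plane} (see \cite{Pis14}). Consequently $\Voronoi{}{A_0}$ and $\Voronoi{}{A_1}$ are independent, stationary, and isotropic Poisson--Voronoi tessellations. As recalled at the beginning of Section~\ref{sec:5.1}, under general position every crossing is a transversal intersection of a $p$-cell of one tessellation with a $(d-p)$-cell of the other, with $1 \le p \le d-1$. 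I would therefore index the crossings by $p$, the dimension of the cell contributed by $A_0$; each crossing lands in exactly one such class, so the density of crossings is the sum over $p = 1, \ldots, d-1$ of the density of class-$p$ crossings.

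For a fixed $p$, the main step is to show that the density of class-$p$ crossings factorizes. I would condition on $A_0$ and exploit independence together with the stationarity and isotropy of $\Voronoi{}{A_1}$. The $p$-skeleton of $\Voronoi{}{A_0}$ is a union of flat $p$-dimensional polytopes, and for any fixed $p$-flat the expected number of points in which it meets the $(d-p)$-skeleton of $\Voronoi{}{A_1}$ is, per unit $p$-dimensional volume, a constant that does \emph{not} depend on the position or orientation of the flat---this orientation independence is exactly where isotropy enters. Hence the conditional expected number of class-$p$ crossings per unit volume equals this per-unit-$p$-volume crossing rate of $\Voronoi{}{A_1}$ multiplied by the $p$-volume density of the $p$-skeleton of $\Voronoi{}{A_0}$; taking the expectation over $A_0$ and using independence turns this into a genuine product of two single-tessellation densities. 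Establishing this factorization rigorously---that conditioning on one process and integrating against the independent, isotropic other yields a product of translation-invariant densities with an orientation-free crossing rate---is the crux of the argument and its main obstacle; it rests on an integral-geometric (Crofton/kinematic) intersection identity and on general position guaranteeing that degenerate and boundary intersections carry no mass.

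It remains to identify the two factors and track the intensity. The $p$-volume density of the $p$-skeleton of a unit-intensity Poisson--Voronoi tessellation is the constant $\VV{p}{d}$ of \eqref{eqn:VV}, which is the corresponding Poisson--Voronoi density from \cite{EdNi19B}, and the per-unit-$p$-volume crossing rate of a $p$-flat with the complementary $(d-p)$-skeleton of a unit-intensity tessellation is the constant $\DD{p}{d}$ of \eqref{eqn:DD}, obtained from the crossing analysis in \cite{EdNi19A}. Under intensity $\lambda = \intensity/2$ the Poisson--Voronoi scaling law multiplies the $p$-volume density by $\lambda^{(d-p)/d}$ and the crossing rate by $\lambda^{p/d}$, so their product is $\VV{p}{d}\DD{p}{d}\cdot(\intensity/2)$, independent of how the exponent splits. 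Summing over the classes and invoking the definition \eqref{eqn:XX} of $\XX{d}$ gives
\begin{align}
  \Crossings (\chi) = \frac{\intensity}{2}\sum\nolimits_{p=1}^{d-1}\VV{p}{d}\DD{p}{d} = \XX{d}\cdot\intensity,
\end{align}
as claimed. Two checks support the computation: for $d = 2$ only $p = 1$ occurs, and the formula reduces to Theorem~\ref{thm:density_of_crossings_in_plane} with $s = 1$; and the symmetry $\VV{p}{d}\DD{p}{d} = \VV{d-p}{d}\DD{d-p}{d}$ reflects the invariance of the crossings under swapping the two colors, confirming that class-$p$ and class-$(d-p)$ crossings occur at equal density.
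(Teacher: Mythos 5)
Your proposal is correct and takes essentially the same route as the paper's proof: independent thinning gives two Poisson processes of intensity $\intensity/2$, the $p$-skeleton volume density $\VV{p}{d}$ from \cite{EdNi19B} is multiplied by the flat-crossing density $\DD{p}{d}$ from \cite{EdNi19A}, and summing over $1 \leq p \leq d-1$ gives $\XX{d} \cdot \intensity$. The only difference is that you make explicit the conditioning-on-$A_0$/isotropy argument that justifies multiplying the two densities, a step the paper performs in a single line without comment.
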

\begin{proof}
  Because the bi-coloring is random, both $A_0 = \chi^{-1}(0)$ and $A_1 = \chi^{-1}(1)$ are stationary Poisson point processes with intensity $\frac{\intensity}{2}$ in $\Rspace^d$.
  By \cite[Theorem 1.1]{EdNi19B}, the density of the $p$-dimensional volume of the $p$-skeleton of either tessellation is $\VV{p}{d} \cdot (\frac{\intensity}{2})^{{(d-p)}/{d}}$, and by \cite{EdNi19A}, the density of the crossings between a $p$-plane and the $(d-p)$-cells of either tessellation is $\DD{p}{d} \cdot (\frac{\intensity}{2})^{p/d}$.
  Multiplying the two densities and taking the sum for $1 \leq p \leq d-1$, we get $\Crossings = 2 \XX{d} \cdot \frac{\intensity}{2} = \XX{d} \cdot \intensity$.
\end{proof}

\subsection{Points in the Plane, Experimentally}
\label{sec:5.2}

This subsection presents experimental results for points in two dimensions.
As a substitute for $\Rspace^2$, we glue the sides of the unit square to form a torus and let $A \subseteq [0,1)^2$ be a stationary Poisson point process with intensity $\varrho > 0$.
Letting $\chi \colon A \to \{0,1\}$ be a random bi-coloring, we construct the chromatic Delaunay mosaic, $\Delaunay{}{\chi}$, while simulating general position of the points, if necessary, so the mosaic is simplicial.
Finally, we count the simplices of different types and write $N_{uv}$ for the number of simplices with $u$ vertices of color $0$ and $v$ vertices of color $1$.
For example, $N_{02}$ counts the edges in $\Delaunay{}{A_1}$, and $N_{11}$ counts the colorful edges in $\Delaunay{}{\chi}$.
Writing $m_p$ for the number of $p$-cells in the two mono-chromatic Voronoi tessellations, and $n_p$ for the number of colorful $p$-cells in $\Voronoi{}{\chi}$, we have
\begin{align}
  m_0  &=  N_{03} + N_{30} , ~~~~~~n_0  =  N_{13} + N_{22} + N_{31}, 
   \label{eqn:m0n0d2} \\
  m_1  &=  N_{02} + N_{20} , ~~~~~~n_1  =  N_{12} + N_{21}, 
    \label{eqn:m1n1d2} \\
  m_2  &=  N_{01} + N_{10} , ~~~~~~n_2  =  N_{11};
    \label{eqn:mpnp_d2_s1}
\end{align}
see Table~\ref{tbl:simplices_d2_s1} for some computed averages.
By symmetry, $N_{uv} = N_{vu}$ in expectation, so almost half the entries in this table are redundant.
\begin{table}[hbt]
    \centering
        \begin{tabular}{rrrr}
      \multicolumn{4}{c}{Average \#Simplices} \\
      \multicolumn{1}{c}{Vertices} & \multicolumn{1}{c}{Edges} & \multicolumn{1}{c}{Triangles} & \multicolumn{1}{c}{Tetrahedra} \\
      \hline
       $N_{01} = 499.7$ & $N_{02} = 1499.0$ & $N_{03} =  999.3$ & $N_{13} =  999.3$ \\
       $N_{10} = 500.3$ & $N_{11} = 2274.8$ & $N_{12} = 2773.8$ & $N_{22} = 1274.8$ \\
                         & $N_{20} = 1501.0$ & $N_{21} = 2775.8$ & $N_{31} =  1000.7$ \\
                         &                    & $N_{30} =  1000.7$ &    
    \end{tabular}
    \caption{The average number of simplices of each type---computed over $100$ repeats of the experiment---in the chromatic Delaunay mosaic of a randomly bi-colored stationary Poisson point process with intensity $\intensity = 1000$ in $[0,1)^2$.}
    \label{tbl:simplices_d2_s1}
\end{table}

\begin{table}[hbt]
    \centering
    \begin{tabular}{r||rrr|rr}
                  & \multicolumn{3}{c|}{\#Colorful Tetrahedra}
                  & \multicolumn{2}{c}{\#Crossings} \\
      \multicolumn{1}{c||}{$\intensity$} & \multicolumn{1}{c}{Min} & \multicolumn{1}{c}{Max} & \multicolumn{1}{c|}{Avg} & \multicolumn{1}{c}{Avg} & \multicolumn{1}{c}{StDev} \\
      \hline \hline
      1000  &  2999  &  3513  & 3265.8  & 1.2711  & 0.0466\\
      2000  &  6121  &  6872  & 6562.0  & 1.2774  & 0.0321\\
      5000  &  15812  &  17025  & 16393.9  & 1.2753  & 0.0201\\
      10000  &  31855  &  33468  & 32744.3  & 1.2731  & 0.0152
    \end{tabular}
    \caption{The minimum, maximum, average number of colorful Delaunay tetrahedra of a bi-chromatic stationary Poisson point process with intensities from $1000$ to $10000$ in $[0,1)^2$.
    \emph{Right:} the mean and standard deviation of the normalized crossing density, $(n_0 - m_0)/\intensity$.}
    \label{tbl:statistics_d2_s1}
\end{table}

\medskip
Table~\ref{tbl:statistics_d2_s1} shows more detailed statistics for the colorful tetrahedra, which correspond to the vertices in the overlay of the two mono-chromatic Voronoi tessellations.
To facilitate the comparison between the mono-chromatic and chromatic Delaunay mosaics, we also consider the surplus of vertices in the chromatic Voronoi tessellation, by which we mean $n_0 - m_0$.
Since $N_{03} = N_{13}$ and $N_{30} = N_{31}$, the surplus is the number of crossings, $n_0 - m_0 = N_{22}$, and by dividing with the intensity, we get an approximation of the \emph{normalized crossing density}, $(n_0 - m_0) / \varrho$.
In our experiments, the latter agrees with the constant in Theorem~\ref{thm:density_of_crossings_in_plane}, which for $s+1 = 2$ is $\frac{4}{\pi} = 1.27\ldots$.
While the standard deviation shrinks with increasing intensity, the approximation of the normalized crossing density does not seem to be affected by the number of points used in the experiment.

\medskip
Moving on to a random tri-coloring, $\chi \colon A \to \{0,1,2\}$, we write $N_{uvw}$ for the number of simplices in $\Delaunay{}{\chi}$ with $u,v,w$ vertices of color $0,1,2$, respectively.
The number of $p$-cells in the mono-chromatic and chromatic Voronoi tessellations thus satisfy
\begin{align}
  m_0  &=  N_{003} + N_{030} + N_{300} ,  ~~~~~~n_0  =  N_{113} + N_{131} + N_{311} + N_{122} + N_{212} + N_{221}, 
    \label{eqn:m0n0s3d2} \\
  m_1  &=  N_{002} + N_{020} + N_{200} ,  ~~~~~~n_1  =  N_{112} + N_{121} + N_{211} , 
    \label{eqn:m1n1s3d2} \\
  m_2  &=  N_{001} + N_{010} + N_{100} ,  ~~~~~~n_2  =  N_{111} ;
    \label{eqn:mpnp_d2_s2}
\end{align}
see Table~\ref{tbl:simplices_d2_s2} for some computed averages.
We omit any detailed statistics for number of colorful $4$-simplices and just mention that $n_0 - m_0 = N_{022} + N_{202} + N_{220}$ counts the crossings, and that $(n_0 - m_0) / \varrho$ agrees with the constant in Theorem~\ref{thm:density_of_crossings_in_plane}, which for $s+1 = 3$ is $\frac{8}{\pi} = 2.54\ldots$.
\begin{table}[hbt]
    \centering
    \begin{tabular}{rrrrr}
      \multicolumn{5}{c}{Average \#Simplices} \\
      \multicolumn{1}{c}{Vertices} & \multicolumn{1}{c}{Edges} & \multicolumn{1}{c}{Triangles} & \multicolumn{1}{c}{Tetrahedra} & \multicolumn{1}{c}{4-Simplices} \\
      \hline
       $N_{001} = 337.0$ & $N_{002} = 1010.8$ & $N_{003} = 673.9$ & $N_{013} = 673.9$ & $N_{113} = 673.9$ \\
                         & $N_{011} = 1526.3$ & $N_{012} = 1864.5$ & $N_{022} = 853.7$ & $N_{122} = 853.7$ \\
                         &                   & $N_{111} = 3557.3$ & $N_{112} = 2716.4$ & 
    \end{tabular}
    \caption{The average number of simplices of each type---computed over $100$ repeats of the experiment---in the chromatic Delaunay mosaic of a tri-colored stationary Poisson point process with intensity $\intensity = 1000$ in $[0,1)^2$.
    Numbers implied by symmetry are omitted \Skip{The counts didn't really got better without perturbation, I guess the software doesn't handle all the non-perturbed cases well. For example, for $N_{002}$ the count ranges from 9535 to 22572 for 100 iterations.}}
    \label{tbl:simplices_d2_s2}
\end{table}
\subsection{Points in Space, Experimentally}
\label{sec:5.3}

This subsection presents experimental results for points in three dimensions.
As a substitute for $\Rspace^3$, we glue the sides of the unit cube to form a $3$-dimensional torus and let $A \subseteq [0,1)^3$ be a stationary Poisson point process with intensity $\varrho > 0$.
We begin with a random bi-coloring, $\chi \colon A \to \{0,1\}$, for which the $p$-cells in the mono-chromatic and chromatic Voronoi tessellations satisfy
\begin{align}
  m_0  &=  N_{04} + N_{40} ,  ~~~~~~n_0  =  N_{14} + N_{23} + N_{32} + N_{41} , 
    \label{eqn:m0n0d3} \\
  m_1  &=  N_{03} + N_{30} ,  ~~~~~~n_1  =  N_{13} + N_{22} + N_{31} , 
    \label{eqn:m1n1d3} \\
  m_2  &=  N_{02} + N_{20} ,  ~~~~~~n_2  =  N_{12} + N_{21} , 
    \label{eqn:m2n2d3} \\
  m_3  &=  N_{01} + N_{10} ,  ~~~~~~n_3  =  N_{11} ;
    \label{eqn:mpnp_d3_s1}
\end{align}
see Table~\ref{tbl:simplices_d3_s1} for some computed averages.
The crossings for two colors in three dimensions are between Voronoi edges and Voronoi polygons, which are counted by $n_0 - m_0 = N_{23} + N_{32}$.
\begin{table}[hbt]
    \centering
    \begin{tabular}{rrrrr}
      \multicolumn{5}{c}{Average \#Simplices} \\
      \multicolumn{1}{c}{Vertices} & \multicolumn{1}{c}{Edges} & \multicolumn{1}{c}{Triangles} & \multicolumn{1}{c}{Tetrahedra} & \multicolumn{1}{c}{4-simplices} \\
      \hline
      $N_{01} = 503.6$ & $N_{02} = 3912.8$ & $N_{03} =  6818.5$ & $N_{04} =  3409.2$ & $N_{14} = 3409.2$ \\
                        & $N_{11} = 5269.8$ & $N_{12} = 12441.8$ & $N_{13} = 11082.8$ & $N_{23} = 4264.4$ \\
                        &                    &                     & $N_{22} = 12793.8$ &
    \end{tabular}
    \caption{The average number of simplices of each type in the chromatic Delaunay mosaic of a bi-colored stationary Poisson point process with intensity $\intensity = 1000$ in $[0,1)^3$.
    Numbers implied by symmetry are omitted.}
    \label{tbl:simplices_d3_s1}
\end{table}

\medskip
We continue with a random tri-coloring, $\chi \colon A \to \{ 0,1,2 \}$, for which the chromatic Delaunay mosaic is a complex in $\Rspace^5$.
The $p$-cells of the mono-chromatic and chromatic Voronoi tessellations satisfy
\begin{align}
  m_0  &=  N_{004} + N_{040} + N_{400} ,  ~~~~~~n_0  =  N_{114} + N_{141} + N_{411} + N_{123} + N_{132} \nonumber \\
  &~~~~~~~~~~~~~~~~~~~~~~~~~~~~~~~~~~~~~~~~~+ N_{213} + N_{312} + N_{231} + N_{321} + N_{222}, 
    \label{eqn:m0n0s3d3} \\
  m_1  &=  N_{003} + N_{030} + N_{300} ,  ~~~~~~n_1  =  N_{113} + N_{131} + N_{311} + N_{122} + N_{212} + N_{221} , 
    \label{eqn:m1n1s3d3} \\
  m_2  &=  N_{002} + N_{020} + N_{200} ,  ~~~~~~n_2  =  N_{112} + N_{121} + N_{211},
    \label{eqn:m2n2s3d3} \\
  m_3  &=  N_{001} + N_{010} + N_{100} ,  ~~~~~~n_3  =  N_{111} ;
    \label{eqn:mpnp_d3_s2}
\end{align}
see Table~\ref{tbl:simplices_d3_s2} for some computed averages.
The crossings are either between an edge of one color and a polygon of another color, or between three polygons, one of each color, which are counted by $n_0 - m_0 = N_{123} + N_{132} + N_{213} + N_{312} + N_{231} + N_{321} + N_{222}$.
We remark that $N_{222}$ is the only count for which the previous subsection does not offer an analytic expression for its expected value.
\begin{table}[hbt]
    \centering
    \begin{tabular}{rrrrrr}
      \multicolumn{6}{c}{Average \#Simplices} \\
      \multicolumn{1}{c}{Vertices} & \multicolumn{1}{c}{Edges} & \multicolumn{1}{c}{Triangles} & \multicolumn{1}{c}{Tetrahedra} & \multicolumn{1}{c}{4-simplices} & \multicolumn{1}{c}{5-simplices} \\
      \hline
      \footnotesize{$N_{001} = 332.7$} 
      & \!\!\footnotesize{$N_{002} = 2585.6$} 
      & \!\!\footnotesize{$N_{003} = 4505.7$} 
      & \!\!\footnotesize{$N_{004} = 2252.8$} 
      & \!\!\footnotesize{$N_{014} = 2252.8$} 
      & \!\!\footnotesize{$N_{114} = 2252.8$} \\
      & \!\!\footnotesize{$N_{011} = 3491.8$} 
      & \!\!\footnotesize{$N_{012} = 8237.4$} 
      & \!\!\footnotesize{$N_{013} = 7331.2$} 
      & \!\!\footnotesize{$N_{023} = 2825.6$} 
      & \!\!\footnotesize{$N_{123} = 2825.6$} \\
    & & \!\!\footnotesize{$N_{111} = 12678.0$} 
      & \!\!\footnotesize{$N_{022} = 8478.8$} 
      & \!\!\footnotesize{$N_{113} = 10150.7$} 
      & \!\!\footnotesize{$N_{222} = 3217.8$} \\
  & & & \!\!\footnotesize{$N_{112} = 17092.8$} 
      & \!\!\footnotesize{$N_{122} = 11696.5$} &
    \end{tabular}
    \caption{The average number of simplices of each type in the chromatic Delaunay mosaic of a tri-colored stationary Poisson point process with intensity $\intensity = 1000$ in $[0,1)^3$.
    Numbers implied by symmetry are omitted.}
    \label{tbl:simplices_d3_s2}
\end{table}

\Skip{
  \HE{[[Remove blue when not needed any more:]]}
  \HE{Table~\ref{tbl:statistics_d3_s2} gives the statistics of the $5$-simplices for a tri-colored Poisson point process in $\Rspace^3$.}
  \begin{table}[hbt]
    \centering
    \begin{tabular}{r||rrr|rr}
                  & \multicolumn{3}{c|}{\#Colorful 5-Simplices}
                  & \multicolumn{2}{c}{\#Crossings} \\
      \multicolumn{1}{c||}{$\intensity$} & \multicolumn{1}{c}{Min} & \multicolumn{1}{c}{Max} &
      \multicolumn{1}{c|}{Avg} & \multicolumn{1}{c}{Avg} & \multicolumn{1}{c}{StDev} \\
      \hline \hline
      100  &  2105  &  3562  & 2762.6  & 20.7148  & 1.9477\\
      200  &  4512  &  6382  & 5396.5  & 20.2091  & 1.3575\\
      500  &  11799  &  15100  & 13492.9  & 20.2144  & 0.9354\\
      1000  &  24955  &  29168  & 26892.9  & 20.1439  & 0.6292
    \end{tabular}
    \caption{\HE{The minimum, maximum, average number of colorful Delaunay $5$-simplices of a tri-chromatic stationary Poisson point process with intensities from $100$ to $1000$ in $[0,1)^3$.
    \emph{Right:} the mean and standard deviation of the normalized crossing density, $(n_0 - m_0)/\intensity$.}}
    \label{tbl:statistics_d3_s2}
  \end{table}
}

\section{Discussion}
\label{sec:6}

This paper introduces chromatic Delaunay mosaics to study the mingling of points of different color classes in Euclidean space.
Our main results are structural---proving relations useful in the topological analysis of mingling---and combinatorial---arguing that the size of the mosaic is sufficiently small to be attractive in applications.
There are two questions suggested by the work in this paper we mention.
\medskip \begin{itemize}
  \item Given a tri-colored stationary Poisson point process in $\Rspace^3$, what is the density of crossings between three $2$-cells---one each from the Voronoi tessellations of the three color classes?
  Indeed, $d = s+1 = 3$ is the first case for which the density of crossings is not yet known.
  What if $d \geq 3$ and $s+1 \geq 3$?
  \item We prove that sets with few spherical $k$-sets have small expected overlays of mono-chromatic Voronoi tessellations; see Lemma~\ref{lem:spherical_k-set_and_overlay}.
  Is the converse also true?
  More generally, how are sets with few spherical $k$-sets, sets with colorings whose mono-chromatic Voronoi tessellations have small overlays, and dense sets under some notion of density related?
  \item For a dense set in $\Rspace^2$, we strengthen the linear bound on the overlay size from expected to worst case, so it holds for every coloring of the set.
  Is there a reasonable notion of density in three and higher dimensions, such that the overlay of the mono-chromatic Voronoi tessellations has linear size for every coloring of a dense set?
\end{itemize} \medskip
There are also open-ended research directions suggested by the work reported in this paper.
For example: how tolerant are our results to faults in the data, such as the misclassification of (biological) cells?
How much of a difference does the change of the color of a small number of points make to the size and structure of the chromatic Delaunay mosaic?
What is the variance of the overlay size assuming the coloring is random?

\subsubsection*{Acknowledgements}
{\footnotesize
  The fourth author thanks Boris Aronov for insightful discussions on the size of the overlay of Voronoi tessellations.}


\end{document}